\documentclass[12pt]{amsart}
\usepackage{amscd}
 \pdfoutput=1
 
\usepackage{amsmath}
\usepackage{amsfonts}
\usepackage{amssymb}
\usepackage{amsthm}
\usepackage{mathtools}

\usepackage[utf8]{inputenc}

	\usepackage{pinlabel}
	\usepackage{lmodern}
	\usepackage{textcomp}
	\usepackage[hidelinks]{hyperref}
	\usepackage{color}		
	\usepackage{tabu}
	
	\usepackage{graphicx} 
    \graphicspath{{bilder/}}
    \usepackage{calc}
    
    \usepackage{tikz-cd} 

\setlength{\textwidth}{6.1in}
\setlength{\textheight}{8.5in}
\setlength{\oddsidemargin}{.2in}
\setlength{\evensidemargin}{.2in}
\setlength{\topmargin}{.1in}
\setlength{\headsep}{.3in}

\newtheorem{theorem}{Theorem}[section]
\newtheorem{lemma}[theorem]{Lemma}
\newtheorem{proposition}[theorem]{Proposition}
\newtheorem*{proposition*}{Proposition}
\newtheorem*{theorem*}{Theorem}
\newtheorem{corollary}[theorem]{Corollary}
\newtheorem{definition}[theorem]{Definition}

\theoremstyle{definition}
\newtheorem{remark}[theorem]{Remark}

\numberwithin{equation}{section}

\newcommand{\Z}{\mathbb{Z}}
\newcommand{\R}{\mathbb{R}}
\newcommand{\C}{\mathbb{C}}
\newcommand{\Q}{\mathbb{Q}}
\newcommand{\Cp}{\mathbb{CP}}

\newcommand{\Ker}{\operatorname{Ker}}
\newcommand{\dbar}{\bar{\partial}}
\newcommand{\M}{\mathcal{M}}

\newcommand{\st}{\text{st}}
\newcommand{\Ha}{\mathcal{H}}

\newcommand{\sub}{\text{sub}}
\newcommand{\Al}{\mathcal{A}}
\newcommand{\ind}{\operatorname{ind}}
\newcommand{\con}{\operatorname{con}}

%
 
%

\makeatletter
\newcommand{\labitem}[2]{%
\def\@itemlabel{{#1}}
\item
\def\@currentlabel{#1}\label{#2}}
\makeatother

\begin{document}

\title[Lch for attaching links in higher dimensions]
{Legendrian contact homology for attaching links in higher dimensional subcritical Weinstein manifolds}
\author{Cecilia Karlsson}

\address{University of Oslo,
 Department of Mathematics,\\
Postboks 1053, Blindern, 
0316 Oslo, Norway\\
cecikarl@math.uio.no}

\begin{abstract}
Let $\Lambda$ be a  link of Legendrian spheres in the boundary of a subcritical $2n$-dimensional Weinstein manifold $X$. We show that, under some geometrical assumptions, the computation of  the Legendrian contact homology of $\Lambda$ can be reduced to a computation of Legendrian contact homology in 1--jet spaces. Since the Legendrian contact homology in 1--jet spaces is well studied, this gives a simplified way to compute the Legendrian contact homology of $\Lambda$. 
%

We restrict to the case when the attaching spheres of the subcritical handles of $X$ do not interact with each other, and we assume that there are no handles of index $n-1$. Moreover, we will only consider mod 2 coefficients for now. The more general situation will be addressed in a forthcoming paper.

As an application we compute the homology of the free loop space of $\Cp^2$.
\end{abstract}

\maketitle
\section{Introduction}
A \emph{Weinstein manifold} is the symplectic counterpart of a Stein manifold in complex geometry. More precisely, any Weinstein manifold $X$ of dimension $2n$ can be given a handle decomposition into symplectic handles of index at most $n$. The handles are attached along isotropic spheres in the contact boundary of $X$, and after having attached the handles of index less than $n$ we get a \emph{subcritical Weinstein manifold}. The top index handles are then attached along Legendrian spheres in the contact boundary of the subcritical part of $X$. It has been shown that much of the symplectic topology of $X$ is encoded in the Legendrian attaching spheres. Indeed, the symplectic homology of any subcritical Weinstein manifold vanishes, and by \cite{surg1} it follows that the symplectic homology of $X$ is isomorphic to the Hochschild homology of the Chekanov-Eliashberg DGA of the Legendrian attaching link. 

The \emph{Chekanov-Eliashberg DGA} $\Al(V, \Lambda)$ of a Legendrian $\Lambda$ in a contact manifold $(V,\lambda)$ is freely generated by the \emph{Reeb chords} of $\Lambda$. These are solution curves to the Reeb vector field $R_\lambda$ associated to $\lambda$, defined by $\lambda(R_\lambda) =1, d\lambda(R_\lambda, \cdot)=0$, and the Reeb chords should have start and end point on $\Lambda$. The grading is given by a Maslov-type index, and the differential counts certain pseudo-holomorphic curves. In the case when $(V, \lambda)$ is the contact boundary of a Weinstein manifold $X$, this is given by a count of pseudo-holomorphic disks in $\R \times V$, capped off with pseudo-holomorphic planes in $X$. Such disks are called \emph{anchored in $X$}. See Section \ref{sec:fillable}. Another special case is when $V$ is the 1--jet space $J^1(M)$ of a smooth manifold $M$. Then one counts pseudo-holomorphic disks either in the symplectization of $J^1(M)$ or in the Lagrangian projection $T^*M$. This case is rather well-studied, and there are a number of computational tools available, even in higher dimensions. See e.g.\ \cite{legsub,PR,trees,orienttrees,korta}. 

In this paper we describe a setup where the Chekanov-Eliashberg DGA of the attaching spheres in the boundary of a subcritical Weinstein manifold $X$ with $c_1(X)=0$ can be computed from Legendrians in 1--jet spaces. In particular, we do not have to consider pseudo-holomorphic disks anchored in $X$.

This is a generalization of the work in \cite{en}, where the Chekanov-Eliashberg DGA is computed in the boundary of subcritical Weinstein 4-manifolds. We will assume that $n>2$, and we focus on a simplified situation where the attaching spheres of the subcritical handles do not interact with each other, and where we do not have any handles of index $n-1$. 
 We also restrict to $\Z_2$-coefficients. 
 The more general situation will be dealt with in a forthcoming paper, together with a careful treatment of signs so that we can compute the Legendrian contact homology of the attaching link over $\Z$.

To obtain our result, we need $\Lambda$ to satisfy some assumptions when passing through the  subcritical handles. Namely, if $\Lambda$ passes through a handle of index $k$ we assume it to be of the form $D^k \times \Lambda_\sub$ in the handle. Here $D^k$ is the core of the subcritical handle and $\Lambda_\sub \subset S^{2n-2k-1}$ is a Legendrian submanifold with respect to the standard contact structure. We also assume that $\Lambda$ is contained in a 1--jet neighborhood of $\Lambda_\st = D^k \times \Lambda_{\st,\sub}$ when passing through the handle, where $\Lambda_{\st,\sub} \subset S^{2n-2k-1} = \{z 
\in \C^{2n-2k}; |z| = 1\}$ is the standard Legendrian unknot, given by the real part of $S^{2n-2k-1}$.
%
%
In addition we assume that the part of $\Lambda$ outside the sub-critical handles is contained in a Darboux ball  $D_a \subset S^{2n-1}$, which we then identify with a ball in $J^1(\R^{n-1})$. 

In this way we cover $\Lambda$ with charts of Legendrians in 1--jet spaces. This is in general not enough to be able to compute the Chekanov-Eliashberg DGA of $\Lambda$ in 1--jet spaces, since we might have pseudo-holomorphic disks that leave the Darboux ball and the 1--jet neighborhood of $\Lambda_\st$. To remedy this problem, we Legendrian isotope $\Lambda$ in a neighborhood of the attaching regions for the subcritical handles, by performing a high-dimensional analogue of the dipping procedure in \cite{sabloffdipp}.

As a result we get two sub-DGAs $\Al_H, \Al_D$ of the Chekanov-Eliashberg DGA of the isotoped $\Lambda$, which are described in detail in Section \ref{sec:differential}. Briefly, $\Al_H$ gives the DGA in the subcritical handles, and is a free product of Chekanov-Eliashberg DGAs, one for each subcritical handle, and where each of those can be computed in the corresponding $J^1(\Lambda_\st)$ from a dipped version of 
 $D^k \times \Lambda_\sub$. The sub-DGA $\Al_D$ is given by $\Al(J^1(\R^{n-1}), \Lambda \cap D_a)$. Moreover, $\Al_D$ and $\Al_H$ contain a common sub-DGA $\Al_S$, described in Section \ref{sec:differential}.  


The main result of the paper is the following.
\begin{theorem}\label{thm:main}
Assume that $V$ and $\Lambda$ satisfy the geometric assumptions stated above. Then the DGA  $\Al(V,\Lambda)$ is quasi-isomorphic to the pushout of the diagram
 \begin{equation*}
\begin{tikzcd} 
	\Al_S \arrow {d}{i} \arrow {r}{i} & \Al_H  \\
	\Al_D, & 
\end{tikzcd}
\end{equation*} 
 where $i$ is the inclusion and where $\Al_S, \Al_D, \Al_H$ can be computed from Legendrians in one-jet spaces.
\end{theorem}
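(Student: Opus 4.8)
The plan is to build an explicit quasi-isomorphism between $\Al(V,\Lambda)$ and the pushout $\Al_H \amalg_{\Al_S} \Al_D$ by combining two ingredients: (1) an invariance argument showing that the isotoped $\Lambda$ — obtained by the high-dimensional dipping procedure in the subcritical attaching regions — has a Chekanov-Eliashberg DGA quasi-isomorphic to $\Al(V,\Lambda)$; and (2) a direct identification of the DGA of the dipped $\Lambda$ with the pushout. Step (1) is a Legendrian isotopy invariance statement for DGAs anchored in $X$; I would invoke the standard chain-homotopy machinery for Legendrian isotopies (bifurcation/Morse-theoretic analysis of the moduli spaces as the dipping is performed), which applies verbatim here since the isotopy is supported away from the subcritical cores and all relevant disks are anchored in $X$ with $c_1(X)=0$ so the grading is well-defined. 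The output of this step is that it suffices to compute the DGA of the dipped Legendrian.

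For step (2), the key geometric input is that after dipping, every rigid anchored pseudo-holomorphic disk contributing to the differential has image contained either in the union of the subcritical handle regions (together with their $J^1(\Lambda_\st)$ neighborhoods) or in the Darboux ball $D_a \subset S^{2n-1}$. The plan is to prove this confinement by an energy/action argument together with the barrier provided by the dips: the dips create Reeb chords whose actions force any disk leaving a prescribed region to have a boundary puncture at one of the new short chords, and a neck-stretching / monotonicity argument near the attaching regions rules out disks that genuinely travel between a handle and the complement of the Darboux ball without passing through $\Al_S$-generators. Once confinement is established, the generators of $\Al(V,\Lambda_{\mathrm{dipped}})$ split as (chords in the handles) $\cup$ (chords in $D_a$), with the overlap being exactly the $\Al_S$-generators, and the differential restricted to each piece is identified — via the standard correspondence between anchored disks in a subcritical handle and disks in $J^1(\Lambda_\st)$ (as in \cite{en}), respectively the identification of disks in $D_a$ with disks in $J^1(\R^{n-1})$ — with the differentials of $\Al_H$ and $\Al_D$. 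This yields a DGA map from the pushout to $\Al(V,\Lambda_{\mathrm{dipped}})$ which is bijective on generators and chain, hence an isomorphism; composing with the quasi-isomorphism from step (1) finishes the proof.

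I expect the main obstacle to be the confinement argument in step (2): showing that no rigid disk can simultaneously interact with a subcritical handle and with the part of $\Lambda$ far from it without factoring through $\Al_S$. This requires a careful SFT-type analysis near the attaching spheres — controlling the behavior of holomorphic disks in the region interpolating between the $J^1(\Lambda_\st)$ neighborhood of the core and the Darboux ball — and is exactly the place where the hypotheses (no index $n-1$ handles, non-interacting subcritical attaching spheres, $n>2$) are used: they guarantee that the relevant Reeb chords between different handle regions either do not exist or have the wrong index/action to support rigid anchored disks. A secondary technical point is verifying that the dipping can be performed so that the induced sub-DGAs $\Al_S \subset \Al_D$ and $\Al_S \subset \Al_H$ are genuinely sub-DGAs (i.e.\ the differential does not map $\Al_S$-generators out of $\Al_S$), which again follows from confinement but should be stated carefully. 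The grading compatibility across the pushout is routine given $c_1(X)=0$, and the $\Z_2$-coefficient assumption sidesteps all orientation bookkeeping.
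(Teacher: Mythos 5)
Your proposal follows the same overall architecture as the paper: replace $\Lambda$ by its dipped version (using isotopy invariance), prove that every rigid disk contributing to the differential is confined either to the handle region or to $D_a$, and then read off $\Al(D_A^H,\Lambda)$ as the pushout of $\Al_H \leftarrow \Al_S \rightarrow \Al_D$ on generators and differentials, finally invoking the quasi-isomorphism $\Al(V,\Lambda)\simeq\Al(D_A^H,\Lambda)$ (Lemma \ref{lma:quasi} in the paper, which is the analogue of your ``identification of disks in $D_a$ with disks in $J^1(\R^{n-1})$''). The one place where your route genuinely diverges is the confinement mechanism. You propose an energy/monotonicity and neck-stretching (SFT) analysis near the attaching regions; the paper instead gets confinement almost for free from the correspondence between rigid pseudo-holomorphic disks and Morse flow trees: a tree with positive puncture at a diagram chord cannot cross the minimum slice $S^{k-1}\times\Lambda_\sub\times\{p_2\}$ of the dipping function without terminating at a chord $b_i[m_2]$ or $b_i[s_2]$, i.e.\ at an $\Al_S$-generator, and the minimum disks are likewise trapped at the $p_2$-slice; only the dipping and handle disks require an action estimate (small dipping parameters, and the order-$\delta m$ action and order-$m$ grading of iterated Reeb orbits as in Lemma \ref{lma:inhandle}) to stay in the $1$--jet neighborhood of $\Lambda_\st$ and to exclude anchors. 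Your approach would presumably work but requires a genuinely harder compactness/neck-stretching analysis that the flow-tree picture sidesteps; conversely, the flow-tree argument is what forces the paper's hypotheses that $\Lambda$ be of product form $D^k\times\Lambda_\sub$ inside a $1$--jet neighborhood of $\Lambda_\st$ in the handle, so that the $1$--jet space machinery of \cite{trees} applies there. One small correction: the exclusion of the $k=n-1$ handles is not, as you suggest, about Reeb chords between different handle regions having the wrong index or action; it is simply that Lemma \ref{lma:quasi} fails for $\Lambda_\sub$ when $2n-2k-1=1$, so the handle sub-DGA cannot be identified with a $1$--jet space computation in that case.
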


As an application we describe a Weinstein handle decomposition of $T^*\Cp^2$ and compute the Chekanov-Eliashberg DGA of the index $4$ attaching sphere.  
Using the relation between the Legendrian contact homology of the attaching spheres and the symplectic homology of the resulting Weinstein manifold \cite{surg1} together with the results of \cite{cotangentloop,cotangentloopviterbo, cotangentloopweber}, which relate the symplectic homology of $T^*M$ with the singular homology of the free loop space of $M$, this gives a description of the singular homology of the free loop space of $\Cp^2$.

From \cite{surg1} it also follows that there is a relation between the Chekanov-Eliashberg DGA of the Legendrian attaching spheres and the wrapped Fukaya category of the cocores of the critical handles. By recent results in \cite{gapash1, geomgen} these cocores generate the wrapped Fukaya category of the resulting Weinstein manifold. In \cite{rm} the authors use this together with the formula in \cite{en} to give examples of mirror manifolds in homological mirror symmetry. Similar calculations are performed in \cite{lauraemmy}. We hope that such computations can be made in higher dimensions with the help of our work. We also hope that one can use our results to perform higher-dimensional analogues of the computations in \cite{etgulekili1, etgulekili2}, where the authors use Kozul duality together with the work in \cite{en} to compute the wrapped Fukaya category for 4-dimensional plumbings.

 \subsection*{Outline}

 In Section \ref{sec:background} we fix notation, give a brief introduction to Weinstein manifolds and define Legendrian contact homology in contact manifolds which are Weinstein fillable. 
 We also describe the easier case when the contact manifold is the 1--jet space of a smooth manifold. 
 In Section \ref{sec:dga} we explain the assumptions needed for us to show that Legendrian contact homology in the boundary of a subcritical Weinstein manifold reduces to a computation of Legendrian contact homology in some different 1--jet spaces. We also describe the dipping procedure and give a more careful statement of Theorem \ref{thm:main}. 
 In Section \ref{sec:setup} we give proofs of the results in Section \ref{sec:dga}. 
 In Section \ref{sec:examples} we use a Weinstein handle decomposition of $T^*\Cp^2$ to compute the singular homology of the free loop space of $\Cp^2$.

\subsection*{Acknowledgments}
We thank Tobias Ekholm and John Rognes for insightful discussions.

\section{Background}\label{sec:background}
A \emph{Weinstein manifold} is a symplectic manifold $(X, \omega)$ equipped with a Liouville vector field $Z$ and a Morse function for which $Z$ is gradient-like. Along the boundary $V$ of $X$ we get an induced contact structure with contact form $\lambda = \iota_{Z}\omega$. The Morse function allows us to give a handle decomposition of $X$ into Weinstein handles, defined below, and where the handles are attached along isotropic spheres in the contact boundary. If $\dim X = 2n$, these handles are of index at most $n$, and if $X$ only have handles of index less than $n$ we say that $X$ is \emph{subcritical}. A contact manifold $(V, \lambda)$ that occurs as the boundary of some Weinstein manifold as above is called \emph{Weinstein fillable}.

\subsection{Notation}
For  $u = (u_1, \dotsc, u_l) \in \R^l$ we write 
\begin{align*}
 du &= (du_1, \dotsc, du_l) \in (T^*\R^l)^l \\
 \partial_u &= (\partial_{u_1}, \dotsc, \partial_{u_l}) \in (T\R^l)^l
\end{align*}
and if $du, dv \in (T^*\R^l)^l$ we write 
\begin{align*}
 vdu &=  \sum_{i=1}^l v_idu_i \\
 du \wedge dv &=  \sum_{i=1}^l du_i \wedge dv_i. 
\end{align*}

If $M$ is a $k$-dimensional smooth manifold we write $(u,v,r) = (u_1, \dotsc , u_k,v_1, \dotsc, v_k,r)$ for the coordinates of the 1--jet space $J^1(M) = T^*M \times \R$ of $M$, where $u$ are the coordinates on $M$, $v$ are the cotangent coordinates and $r$ is the coordinate in the $\R$-direction. 

\subsection{Geometry}\label{sec:geometry}
Let $X$ be a subcritical Weinstein manifold of dimension $2n$, let $V$ be its contact boundary. Assume that $X$ admits a Weinstein handle decomposition 
\begin{equation*}
X = B^{2n} \cup \bigcup_{k=1}^{n-1} \bigcup_{i=1}^{l_k}\Ha_i^k,
\end{equation*}
where $\Ha_1^k, \dotsc, \Ha_{l_k}^k$ are Weinstein handles of index $k$. We will use the following model of such a handle:
\begin{equation*}
 \Ha^k =\{(x,y,p,q) \in \R^{2n-2k} \times \R^{2k} : -\delta^2 \leq \sum_{j=1}^{n-k}\frac{1}{2}(x_j^2 + y_j^2) +  \sum_{j=1}^k 2p_j^2 - q_j^2\leq \delta^2\},
\end{equation*}
where $\delta > 0 $ is some small constant. 
 
The handle has a Liouville vector field
\begin{equation}
 Z = \sum_{j=1}^{n-k}\frac{1}{2}(x_j \partial_{x_j} + y_j\partial_{y_j}) + \sum_{j=1}^k  2p_j\partial_{p_j} - q_j\partial_{q_j}
\end{equation} 
with respect to the standard symplectic form $\omega_\st = dx \wedge dy + dp \wedge dq$,
and this vector field is transverse to the boundary
\begin{equation*}
\Ha_{\pm}^k= \{ (x,y,p,q) \in \Ha^k; \sum_{j=1}^{n-k}\frac{1}{2}(x_j^2 + y_j^2) +  \sum_{j=1}^k 2p_j^2 - q_j^2 = \pm \delta^2\} ,
\end{equation*}
and points out of $\Ha^k$ along $\Ha_+^k$ and into $\Ha^k$ along $\Ha_{-}^k$. 
If now $X'$ is a Weinstein manifold with contact boundary $V'$, and if $\Upsilon' \subset V'$ is an isotropic sphere of dimension $k-1$, then one can use the Liouville vector field of $X'$ along a neighborhood of $\Upsilon'$ in $V'$ and the Liouville vector field of $\Ha^k$ along $\Ha_-^k$ to attach $\Ha^k$ to $X'$ to get a new Weinstein manifold $X = X' \cup \Ha^k$. See \cite{weinstein}. The boundary of $X$ will again be a contact manifold, where the contact form on $\Ha_{+}^k$ is given by
\begin{equation}
 \alpha_{+} = \omega_{st}(Z,\bullet)|_{\Ha_{+}^k} = \sum_{j=1}^{n-k}\frac{1}{2}(x_j d{y_j} - y_jd{x_j}) +  \sum_{j=1}^k 2p_jd{q_j} + q_jd{p_j}.
\end{equation}

\begin{remark}
 If $(V,\lambda)$ is the contact boundary of a subcritical Weinstein manifold then we will identify it with standard contact $S^{2n-1}$ with handles attached. That is 
\begin{equation*}
V = S^{2n-1} \cup \bigcup_{k=1}^{n-1} \bigcup_{i=1}^{l_k}\Ha_{i,+}^k.
\end{equation*}  
\end{remark}

\subsection{LCH in fillable contact manifolds}\label{sec:fillable}
Here we give a brief overview of the definition of Legendrian contact homology in the boundary of a Weinstein manifold. We refer to  \cite{sft, surg1, tobdisk} for a more thorough treatment. 
Let $(V, \lambda)$ be the contact boundary of a Weinstein manifold $X$, and let $\Lambda \subset V$ be a Legendrian submanifold. Assume that $c_1(X) = 0$. Then we define the Legendrian contact homology (LCH) of $\Lambda$ to be the homology of the differential graded algebra (DGA) $\Al(V, \Lambda)$ which is defined as follows.

The algebra is freely generated over $\Z_2[H_2(X,\Lambda)]$ by the \emph{Reeb chords of $\Lambda$}, which are solution curves of the Reeb vector field $R_\lambda$ having start and end point on $\Lambda$. The chords are graded by a Maslov type index, called the \emph{Conley-Zehnder index} $\mu_{cz}$. That is, if $c$ is a Reeb chord of $\Lambda$ then the grading of $c$ is given by
\begin{equation}\label{eq:czgrading}
 |c| = \mu_{cz}(\gamma_c) -1,
\end{equation} 
where $\gamma_c$ is a closed path in $V$ from the end point $c_+$ of $c$ in $\Lambda$, going through $\Lambda$ to the starting point $c_-$ of $c$ and then follows $c$ to the end point $c_+$ in the case when $c_-$ and $c_+$ belong to the same component of $\Lambda$. In the case when the start and end point belong to different components of $\Lambda$ the path $\gamma_c$ also contains a path from the component of $\Lambda$ containing $c_+$ to the component of $\Lambda$ containing $c_-$. The Conley-Zehnder index measures how much the contact distribution $\xi = \Ker \lambda$ rotates along this path.

The differential is defined by a count of \emph{anchored pseudo-holomorphic curves} in $X$, as follows. Let $J$ be an almost complex structure on $X$ which is compatible with the symplectic form and which is cylindrical in a neighborhood $\R_t \times V$ of the boundary $V$, meaning that it is invariant under translations in the $\R$--factor, gives a complex structure on $\Ker \lambda$ and satisfies that $J(\partial_t)=R_\lambda$.

An  \emph{anchored pseudo-holomorphic disk} is a two-level $J$-holomorphic building, where the top level is given by a $J$-holomorphic map
\begin{equation*} 
 u \colon (D_m, \partial D_m) \to (\R \times V, \R \times \Lambda),  
\end{equation*}
where $D_m$ is the unit disk in $\C$ with $m$ punctures $p_0, p_1, \dotsc, p_{m-1}$ along its boundary, where $p_0$ is distinguished and located at $1$. This puncture is called \emph{positive} and the punctures $p_1, \dotsc, p_{m-1}$ are called \emph{negative}. Near the puncture $p_0$ the map $u$ is required to be asymptotic to a Reeb chord $a$ of $\Lambda$ at $+\infty$, and near the negative puncture $p_i$ it should be asymptotic to a Reeb chord $b_i$ of $\Lambda$ at $-\infty$ for $i = 1, \dotsc, m-1$. The disk $D_m$ is also allowed to have interior punctures $z_1, \dotsc , z_l$, so that near $z_i$ the map $u$ is asymptotic to a cylinder over a  Reeb orbit $\gamma_i$ at $- \infty$, $i = 1, \dotsc, l$. (A \emph{Reeb orbit} is a periodic solution to the Reeb vector field.)  

The lower level consists of $J$-holomorphic maps $v_i \colon \C \to X$, $i = 1, \dotsc, l$,  from the punctured sphere and where $v_i$ maps a neighborhood of the puncture asymptotically to the Reeb orbit cylinder over the  Reeb orbit $\gamma_i$.

Let $\M_{A}^{\R \times V;X}(a, {\bf b})$, ${\bf b} = b_1 \cdots b_m$, denote the moduli space of such buildings, where $A$ denotes the homology class of the building. Then the differential of $\Al(V, \Lambda)$ is defined on generators by
\begin{equation}\label{eq:buildingdiff}
 \partial(a) = \sum_{\dim \M_{A}^{\R \times V;X}(a, {\bf b}) =1} |\M_{A}^{\R \times V;X}(a, {\bf b})|A {\bf b}
\end{equation}
where $|\M_{A}^{\R \times V;X}(a, {\bf b})|$ is the mod 2 count of $\R$-components in the moduli space, and the differential is extended to the whole of $\Al(V,\Lambda)$ by the Leibniz rule. For proofs that the homology of this DGA gives a Legendrian invariant we refer to \cite{sft, surg1,tobdisk}.

\subsection{Legendrian contact homology in 1--jet spaces}\label{sec:1jet}
Let $M$ be a smooth manifold of dimension $n$. Then the \emph{$1$-jet space of $M$}, $J^1(M) =  T^*M\times \R$, is a contact manifold with contact form $\alpha =  dz- vdu$, where $u$ are local coordinates on $M$, $v$ are cotangent coordinates and $z$ is the coordinate in the $\R$-direction.
The Reeb vector field is given by $\partial_z$.

In this case one can use the \emph{Lagrangian projection} 
\begin{equation*}
 \Pi_\C: J^1(M) \to T^*M
\end{equation*}
to study Legendrian submanifolds. 
The Legendrians are projected to exact, immersed Lagrangians of $T^*M$ under this projection, and the double points of $\Pi_\C(\Lambda)$ correspond to Reeb chords of $\Lambda$.  Moreover,  after a small Legendrian isotopy of $\Lambda$ we may assume that it is \emph{chord generic}, meaning that $\Pi_\C(\Lambda)$ is an immersion with transverse double points as the only intersections. This implies that if $\Lambda$ is closed, then the number of Reeb chords of $\Lambda$ is finite.

\subsubsection{Grading}
The grading of a Reeb chord of $\Lambda$ can be explicitly described as follows. Consider the  \emph{front projection}
\begin{equation*}
\Pi_F \colon J^1(M) \to M \times \R,
\end{equation*}
and  the  \emph{base projection} 
\begin{equation*}
\Pi \colon J^1(M) \to M.                                                                                                                                                                  \end{equation*} We will assume that $\Lambda$ is \emph{front generic}. We refer to [\cite{legsub}, Section 3.2] for a definition of this, but briefly this means that $\Pi|_\Lambda$ is an immersion outside a  co-dimension 1 singular set $\Sigma \subset \Lambda$, and that there is a subset $\Sigma' \subset \Sigma$ of codimension 1 so that the points in $\Sigma \setminus \Sigma'$ belong to a standard cusp singularity of the front projection. The points in $\Sigma\setminus \Sigma'$ will be called the \emph{cusp edge points} of the front of $\Lambda$.

Let $\Lambda_1, \dotsc, \Lambda_s$ be the connected components of $\Lambda$. 
For each component $\Lambda_j$ fix a point $q_j \in \Lambda_j$ so that $q_j$ does not project to a singularity under the front projection and so that it does not coincide with a Reeb chord start or end point.

For each pair $\Lambda_i, \Lambda_j$ such that there is a Reeb chord between them, pick one such chord $c_{ij}$, which we will call a \emph{connecting chord}. 
Let $c_{ij,\pm}$ be the start and end point of $c_{ij}$ so that $z(c_{ij,+}) > z(c_{ij,-})$. 
Suppose that $c_{ij,+} \in \Lambda_j, c_{ij,-} \in \Lambda_i$. 
Then there are locally defined functions $f_i,f_j \colon U \to \R$, where $U \subset M$ and $\Pi(c_{ij}) \in U$, so that a neighborhood of $c_{ij,-}$   in $\Lambda_i$  and of $c_{ij,+}$ in $\Lambda_j$ is given by 
\begin{equation*}
 \{(u,df_i(u), f_i(u)); u \in U\} \text{ and } \{(u,df_j(u), f_j(u)); u \in U\},
 \end{equation*}
respectively,  and $c_{ij}$ corresponds to a non-degenerate critical point of $f_j - f_i$. 
Let  
\begin{equation*}
I(c_{ij}) = \ind_{c_{ij}}(f_j - f_i)
\end{equation*}
be the Morse index of $f_j - f_i$ at this critical point. 

\begin{definition}
 Assume that $\Lambda \subset J^1(M)$ is a front generic Legendrian submanifold. A path $\gamma \subset \Lambda$ is \emph{admissible} if it intersects the singularities of the front projection of $\Lambda$ transversely at cusp edges.
\end{definition}

\begin{definition}\label{def:du}
If $\gamma \subset \Lambda$ is an oriented admissible path, let $D(\gamma)$ ($U(\gamma)$) be the number of cusp edges of $\Lambda $ that $\gamma$ transverses downwards (upwards) with respect to the $z$-coordinate.
\end{definition}

Now pick admissible paths $\tilde \gamma_{c_{ij,j}}\subset \Lambda_j$ from $p_j$ to $c_{ij,+}$ and $\tilde \gamma_{c_{ij,i}}\subset \Lambda_i$ from  $p_i$ to $c_{ij,-}$. 
Let 
\begin{equation}\label{eq:I1}
I_{ij} = D(\tilde \gamma_{c_{ij,j}}) - U(\tilde \gamma_{c_{ij,j}}) - D(\tilde \gamma_{c_{ij,i}}) + U(\tilde \gamma_{c_{ij,i}}) - I(c_{ij}),
\end{equation}
for $i \neq j$ and $c_{ij}$ going from $\Lambda_i$ to $\Lambda_j$, and let 
\begin{equation}\label{eq:I2}
I_{ji} = - I_{ij}, \qquad  I_{ii} = 0. 
\end{equation}    

If now $c$ is a Reeb chord of $\Lambda$ with $c_\pm \in \Lambda_{i\pm}$, pick admissible paths $\gamma_{c,\pm} \subset \Lambda_{i\pm}$ from $c_\pm$ to $q_{i\pm}$. These paths are called \emph{capping paths} for $c$.

\begin{definition}\label{def:grading}
The grading of a Reeb chord $c$ of $\Lambda$ is given by 
\begin{equation*}
|c| = D(\gamma_{c,+}) - U(\gamma_{c,+}) -D(\gamma_{c,-}) + U(\gamma_{c,-}) + I(c) +I_{i-i+}-1.
\end{equation*}
\end{definition}

\subsubsection{Differential}
The differential of $\Al(J^1(M),\Lambda)$ is defined by counting pseudo-holomorphic disks of the Legendrian $\Lambda$. This can be done in two different ways, either by counting disks in the cotangent bundle $T^*M$, with the disks having boundary on the Lagrangian $\Pi_\C(\Lambda)$, or by counting disks in the symplectization of $J^1(M)$, with the disks having boundary on the Lagrangian $\R \times \Lambda$. In \cite{Georgios} it is proven that for certain choices of almost complex structures these two different set-ups give the same count of elements mod 2, and in \cite{teckcob} this is proven to hold also with $\Z$-coefficients.

 We give the definition of the count in the cotangent bundle, and refer to \cite{ratsft,Georgios,teckcob} for the definition of the count in the symplectization. 

Let $J$ be an almost complex structure of $T^*M$, compatible with the standard symplectic structure. Let $D_{m+1}$ denote the punctured unit disk in $\C$ with $m+1$ punctures $p_0,\dotsc,p_m$ cyclically ordered along the boundary in the counterclockwise direction, starting at $p_0 =1$. Let $a, b_1,\dotsc,b_m$ be Lagrangian projections of Reeb chords.
\begin{definition}
We say that 
\begin{equation*}
u: (D_{m+1}, \partial D_{m+1}) \to (T^*M, \Pi_\C(\Lambda))
\end{equation*}
 is a \emph{$J$-holomorphic disk of $\Lambda$} with positive puncture $a$ and negative punctures \linebreak
 ${\bf b} =b_1 \cdots b_m$ if

 \begin{itemize} 
 \item $\dbar_J u := du + J du \circ i =0$,
 \item $u|_{\partial D_{m+1} \setminus\{p_0,\dotsc,p_m\}}$ has a continuous lift $\tilde u: {\partial D_{m+1} \setminus\{p_0,\dotsc,p_m\}} \to \Lambda$,
 \item $u(p_0)=a$, and $\tilde u$ makes a jump from lower to higher $z$-coordinate when passing through $p_0$ in the counterclockwise direction,
  \item $u(p_i)=b_i$, $i=1,\dotsc, m$, and $\tilde u$ makes a jump from higher to lower $z$-coordinate when passing through $p_i$ in the counterclockwise direction. 
\end{itemize}
\end{definition}
We let $\M(a, {\bf b}) = \M_{\Pi_\C(\Lambda)}(a, {\bf b})$ denote the moduli space of $J$-holomorphic disks of $\Lambda$ with positive puncture $a$ and negative punctures ${\bf b}$. We consider two disks in the moduli space to be equal if they only differ by a biholomorphic reparametrization of the domain. 

We define the differential of $\Al(J^1(M),\Lambda)$ to be given by
\begin{equation}\label{eq:thesum}
 \partial(a) = \sum_{\dim \M(a, {\bf b}) =0} |\M(a, {\bf b})|_R {\bf b}
\end{equation}
on generators $a$, and  extend it to the whole of $\Al(\Lambda)$ by the Leibniz rule. 
Here  $|\M(a, {\bf b})|_R \in R$ is the algebraic count of elements in the moduli space, which in the case of $R =\Z_2$ is given by the modulo 2 count.    

In \cite{PR} it is proven that the homology of $\Al(V,\Lambda)$ is a well-defined Legendrian invariant, that is, $\partial^2 =0$ and the homology  is invariant under Legendrian  isotopies.

\subsubsection{Morse flow trees}
Instead of using pseudo-holomorphic disks to define the differential, one can as well use \emph{Morse flow trees}. These are defined as follows. 

Let $\Lambda \subset J^1(M)$ be a chord generic Legendrian submanifold with \emph{simple front singularities}, meaning that the codimension 2 subset $\Sigma' \subset \Lambda$ where the singularities of the base projection do not consist of cusp singularities is empty. If $\dim \Lambda = 2$ we may also allow swallow tail singularities, see [\cite{trees}, Section 2.2.A].

 Away from the singular set $\Sigma$, the pre-image of an open set $U \subset M$ under the base projection $\Pi$ is given by the multi-1-jet lift of locally defined functions 
 \begin{equation*}
f_1,\dotsc,f_k : U \to \R, \quad U \subset M,                                                                                                                                                           \end{equation*}
that is,
\begin{equation*}
 \Pi^{-1}(U)|_{\Lambda} = \bigcup_{i=1}^k  \{(x, df_i(x), f_i(x));\, x \in U\}.
\end{equation*}
%
%
These locally defining functions of $\Lambda$ are used to build the Morse flow trees. More precisely, after having fixed a metric on $M$, these trees are defined as follows.

\begin{definition}
A \emph{Morse flow tree} is an immersed tree $\Gamma$ in $M$  satisfying the following conditions.

\begin{itemize}
 
 \item The tree is rooted and oriented away from the root. The root is $1$- or $2$-valent.
\item Each edge $\gamma$ of $\Gamma$ is a solution curve of some local function difference: 
\begin{equation*}
 \dot \gamma = \dot \gamma_{ij}(t) = - \nabla (f_i- f_j)(\gamma_{ij}(t)), 
\end{equation*}
where $f_i> f_j$ are locally defining functions of $\Lambda$. 
 \item The edge $\gamma_{ij}$ is given the orientation of 
 $- \nabla(f_i -f_j)$.

 \item The cotangent lift of $\Gamma$ gives an oriented closed curve in $\Pi_\C(\Lambda)$, in the following way. Each edge $\gamma_{ij}$ has two cotangent lifts 
 \begin{equation*}
 \hat \gamma_{ij,k} = \{(x, df_k(x)); \, x \in \gamma_{ij} \} \subset \Pi_\C(\Lambda), \quad k=i,j.
\end{equation*}
 If we give $\hat \gamma_{ij,i}$ the orientation of $\gamma_{ij}$, and $\hat \gamma_{ij,j}$  the negative orientation of $\gamma_{ij}$, then  
the union of all the lifted edges of $\Gamma$ are required to patch together to give  a closed curve in $\Pi_\C(\Lambda) \subset T^*M$. 
 
 \item The vertices of $\Gamma$ have valence at most $3$, and
are of the following form.
\begin{itemize}
 \item $1$-valent \emph{punctures}, which are critical points of the corresponding local function difference,
 \item $2$-valent \emph{punctures}, which are critical points of the corresponding local function difference,
 \item $3$-valent \emph{$Y_0$-vertices}, where flow lines $\gamma_{ij}, \gamma_{jk}$, $\gamma_{ik}$ meet,
 \item $3$-valent \emph{$Y_1$-vertices}, similar to $Y_0$-vertices but contained in $\Pi(\Sigma)$,
 \item $2$-valent \emph{switch-vertices}, contained in $\Pi(\Sigma)$, with corresponding flow lines which are tangent to $\Pi(\Sigma)$ at the vertex,
 \item $1$-valent \emph{end-vertices}, contained in $\Pi(\Sigma)$, with corresponding flow lines which are transverse to $\Pi(\Sigma)$ at the vertex.
\end{itemize}
\item The root of the tree $\Gamma$ is required to be a puncture, and is called the \emph{positive puncture} of the tree. All other punctures are called \emph{negative}. 
\end{itemize}
\end{definition}

Since a puncture $p$ is a critical point of a local function difference, we have stable and unstable manifolds associated to $p$. These we denote by $W^s(p)$ and $W^u(p)$, respectively.

The dimension of a Morse flow tree $\Gamma$ with positive puncture $a$ and negative punctures $b_1,\dotsc,b_m$ can be computed using data from the tree, and is given by
\begin{equation*}
 \dim(\Gamma) = 2 + \dim W^u(a) + \sum_{j=1}^m (\dim W^s(b_j) - n + 1) + e(\Gamma) - s(\Gamma) - Y_1(\Gamma),
\end{equation*}
where $e(\Gamma)$, $s(\Gamma)$, $Y_1(\Gamma)$ is the number of end-, switch- and $Y_1$-vertices of $\Gamma$.

\begin{definition}
 A \emph{rigid Morse flow tree of $\Lambda$} is a Morse flow tree of dimension $0$ which is transversely cut out from the space of flow trees.
\end{definition}

In \cite{trees} it is proven that one can define the differential by counting rigid Morse flow trees of $\Lambda$ instead of counting rigid pseudo-holomorphic disks. In some situations this gives an easier way to understand $\Al(J^1(M), \Lambda)$, since this avoids solving a $\dbar$-equation, which is a non-linear PDE.

\section{The Chekanov-Eliashberg DGA in a subcritical Weinstein manifold}\label{sec:dga}
Let $\Lambda$ be a Legendrian submanifold of a contact manifold $V$ which is the boundary of a subcritical Weinstein manifold $X$ of dimension $2n$, $n>2$. Assume that $c_1(X) = 0$. 
In this section we describe the Chekanov-Eliashberg DGA of $\Lambda$, $\Al(V,\Lambda)$, in terms of sub-DGAs which can be computed from Legendrians in 1--jet spaces. To do this we need to put some additional assumptions on $V$ and $\Lambda$.

\subsection{Preliminary assumptions}\label{sec:assumptions}
To simplify notation we will assume that  $X$ only has one subcritical handle $\Ha^k$ attached. This  easily generalizes to the case of having several subcritical handles attached along isotropic spheres where no attaching spheres of the subcritical handles passes through any other subcritical handle. We also assume that $k<n-1$. The more general situation will be dealt with in a forthcoming paper. 

So let $\Upsilon \subset S^{2n-1} = \partial B^{2n}$ be the isotropic sphere along which the handle $\Ha^k$ is attached. 
We will assume that there is a Darboux ball $B_A \subset S^{2n-1}$ of radius $A$ containing  the attaching region $N(\Upsilon)$ of the handle. 
This means that we have a contactomorphism
\begin{equation*}
\phi \colon B_A \to D_A = \{(x,y,z) \in J^1(\R^{n-1}); |x|^2 + |y|^2 +z^2 \leq A^2\}, \quad \phi^* (dz - ydx) = \alpha_{S^{2n-1}},
\end{equation*}
where $\alpha_{S^{2n-1}}$ is the standard contact structure on $S^{2n-1}$.
Thus we can consider the handle attachment as being performed in $D_A \subset J^1(\R^{n-1})$ instead. 
Let $D_A^H$ denote the resulting surgered disk, and let $B_A^H = \phi^{-1}(D_A^H)$, where $\phi$ is extended by the identity over the handle. 

\begin{lemma}\label{lma:quasi}
Let $n \geq 2$ and let $\Lambda \subset V^{2n-1}$ be a Legendrian submanifold such that $\Lambda \subset B_A^H$ and assume that $B_A$ does not intersect any coordinate subspaces $\{z_i =0\}$ of $S^{2n-1} = \{|z|^2 = 1 \} \subset \C^n$. 
If $A$ is sufficiently small, then $\Al(V,\Lambda) \underset{\mathrm{quasi}}{\simeq} \Al(D_A^H, \phi(\Lambda))$.
\end{lemma}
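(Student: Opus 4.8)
The plan is to show that the inclusion $B_A^H \hookrightarrow V$ induces a quasi-isomorphism of Chekanov--Eliashberg DGAs, by proving that, for $A$ small enough, every anchored pseudo-holomorphic disk contributing to the differential of $\Al(V,\Lambda)$ in fact stays inside $B_A^H$, and that no interior punctures (Reeb orbits, hence no anchoring planes) can occur. Since the generators of both DGAs are the Reeb chords of $\Lambda$, and these lie in $B_A^H \subset V$, the two DGAs have the same underlying graded algebra once we check that the grading agrees; the content is entirely in matching the differentials. So first I would recall that the Reeb flow on $\partial B^{2n} = S^{2n-1}$ with its standard contact form, together with the handle region, has the property that $B_A^H$ is ``Reeb-convex'' in a suitable sense when $A$ is small: the Reeb chords of $\Lambda$ all have action bounded by a quantity that goes to $0$ with $A$ (after rescaling), while any Reeb orbit of $S^{2n-1}$, or any Reeb chord escaping $B_A^H$, has action bounded below by a fixed positive constant. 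This is where the hypothesis that $B_A$ avoids the coordinate subspaces $\{z_i = 0\}$ enters: it guarantees that $B_A$ sits in a region where the Reeb flow of $S^{2n-1}$ (whose closed orbits are the Hopf fibers) is, on the relevant scale, close to the linear Reeb flow on a Darboux chart, so that short Reeb orbits do not appear and the action of chords is controlled.

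Next I would invoke the standard monotonicity/action argument from SFT-type compactness (as in the references \cite{sft,surg1,tobdisk} cited in Section~\ref{sec:fillable}). A rigid anchored disk $u$ counted in $\partial a$ has positive asymptotic $a$ and negative asymptotics $b_1,\dots,b_m$ all Reeb chords of $\Lambda$, plus interior negative asymptotics at Reeb orbits $\gamma_i$. By Stokes' theorem the $d\lambda$-energy of the top level equals $\mathfrak{a}(a) - \sum \mathfrak{a}(b_i) - \sum \mathfrak{a}(\gamma_i) \leq \mathfrak{a}(a)$, which is $O(A^2)$ (or $o(1)$ after rescaling). By the monotonicity lemma for $J$-holomorphic curves, a disk of such small energy with boundary on $\R \times \Lambda \subset \R \times B_A^H$ cannot reach points of $\R\times V$ at contact-distance bounded away from $\Lambda$; choosing $A$ small enough forces the image of $u$ to remain in $\R \times B_A^H$, where $\phi$ identifies the geometry with $\R \times D_A^H$. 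Simultaneously, since each $\mathfrak{a}(\gamma_i) > 0$ would have to be bounded below by the minimal period of a closed Reeb orbit meeting a small neighborhood of $\Lambda$ — which is bounded below by a constant independent of $A$ once $A$ is small — the inequality $\sum \mathfrak{a}(\gamma_i) \le \mathfrak{a}(a) = o(1)$ rules out interior punctures entirely. Hence the lower level of every building is empty, $u$ is an honest disk in $\R \times B_A^H$, and the almost complex structure may be taken to agree (under $\phi$) with a cylindrical one on $\R \times D_A^H$. Thus the moduli spaces $\M_A^{\R\times V;X}(a,\mathbf b)$ and the corresponding moduli spaces in $\R\times D_A^H$ coincide, and the differentials agree.

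Finally I would address the bookkeeping: that the Conley--Zehnder/Maslov grading \eqref{eq:czgrading} computed in $V$ agrees with the grading computed in $D_A^H$. Since $c_1(X) = 0$ and the capping paths for a chord can be chosen inside $B_A^H$ (which is contractible up to the handle), the trivialization of $\xi$ used to define $\mu_{cz}$ can be taken to be the restriction of the one on $D_A^H$; the rotation of the contact distribution along a capping path is then the same whether computed in $V$ or in $D_A^H$, so $|c|$ is unchanged. Putting these together: the identity map on generators extends to a DGA isomorphism $\Al(V,\Lambda) \to \Al(D_A^H,\phi(\Lambda))$ — in particular a quasi-isomorphism — for $A$ sufficiently small.

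The main obstacle I anticipate is making the two action estimates uniform and simultaneously valid: on one hand, the upper bound $\mathfrak{a}(a) = o(1)$ on chord actions as $A\to 0$ needs the rescaling of the Darboux ball to be compatible with the contact form on $S^{2n-1}$ near $B_A$, which is exactly what the no-coordinate-subspace hypothesis buys us (it keeps $B_A$ in a region where $\alpha_{S^{2n-1}}$ is uniformly equivalent to the standard $dz - y\,dx$); on the other hand, the positive lower bound on the periods of Reeb orbits through a neighborhood of $\Lambda$, and on the action of any chord leaving $B_A^H$, must be shown to depend only on fixed geometric data and not on $A$. Controlling the Reeb dynamics in the handle region $\Ha_+^k$ (where $\alpha_+$ has the explicit form given in Section~\ref{sec:geometry}) and matching it up with the dynamics in $D_A$ along $\partial N(\Upsilon)$ is the technically delicate point; once those two scales are separated, the monotonicity argument is routine.
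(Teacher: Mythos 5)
The paper's own ``proof'' is a one-line citation to [\cite{en}, Lemma 5.10], so the comparison has to be with the argument that citation encapsulates. Your sketch reproduces the correct first half of that argument (action bounds on chords inside the Darboux ball, monotonicity to confine disks with a short positive asymptotic to $\R\times B_A^H$, and the lower bound on periods of closed Reeb orbits to kill the anchoring planes), but it has a genuine gap at the very first step: the claim that ``the generators of both DGAs are the Reeb chords of $\Lambda$, and these lie in $B_A^H$.'' A Reeb chord of $\Lambda$ in $V$ is any Reeb trajectory with endpoints on $\Lambda$; nothing forces its interior to stay in $B_A^H$. For the standard (or irrational-ellipsoid) contact form on $S^{2n-1}$ the Reeb flow is periodic (resp.\ quasi-periodic on invariant tori), so orbits issuing from $\Lambda$ return to any neighborhood of $\Lambda$ infinitely often and produce infinitely many ``long'' chords of action bounded below by a constant independent of $A$. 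These are generators of $\Al(V,\Lambda)$ that are \emph{not} generators of $\Al(D_A^H,\phi(\Lambda))$, which is exactly why the lemma asserts only a quasi-isomorphism; your conclusion that the identity on generators extends to a DGA \emph{isomorphism} cannot be right.

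What your argument actually establishes is that the short chords span a sub-DGA isomorphic to $\Al(D_A^H,\phi(\Lambda))$ (a disk with positive puncture at a short chord has $d\lambda$-energy $o(1)$, hence stays in the ball and has only short negative punctures). The missing content — and the real point of [\cite{en}, Lemma 5.10] and its antecedents in \cite{surg1,tobdisk} — is showing that the inclusion of this sub-DGA is a quasi-isomorphism. That requires controlling the long chords: one shows that their Conley--Zehnder index grows (roughly linearly) with their action, so that in each fixed degree only short chords and finitely many long ones appear, and then runs an action-filtration/stabilization argument. This is also where the hypothesis that $B_A$ avoids the coordinate subspaces $\{z_i=0\}$ genuinely enters: the closed Reeb orbits of the irrational-ellipsoid form are the coordinate circles, which lie in those subspaces, and keeping $B_A$ away from them is what makes the long chords nondegenerate with controlled index, rather than (as you suggest) making the flow ``close to linear'' near $B_A$. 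Without some version of this index-versus-action estimate and the ensuing algebraic argument, the proof is incomplete.
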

\begin{proof}
This follows similarly to [\cite{en}, Lemma 5.10].
\end{proof}

Assuming that $V$ and $\Lambda$ satisfy the requirements of this lemma we will consider $\Lambda$ as a subset of $D_A^H$ from now on, dropping the map $\phi$ to simplify notation.

We will need some further assumptions on $\Lambda$ to be able to describe $\Al(D_A^H, \Lambda)$ in terms of sub-DGAs of Legendrians in 1--jet spaces. First, we need to assume that there is an $a < A$ such that the attaching sphere of the handle $\Ha^k$ is contained in $D_A \setminus D_a$ and that 
\begin{equation}
\Lambda \cap \left(D_A^H \setminus \left(\Ha^k_+ \cup N(\Upsilon) \right) \right) \subset D_a. 
\end{equation}
Let 
\begin{align*}
  T^*_{\rho_1}(S^{k-1}) &= \{(u,v) \in \R^{2k};|u| = 1, u \cdot v = 0, |v|^2 < \rho_1\}, \\
     D_{\rho_2}^{2n-2k}   &= \{(s,t) \subset \R^{2n-2k}; |s|^2 + |t|^2 < \rho_2\}, \\ 
     I_{\rho_3}   &= \{r \in \R; |r| < \rho_3\},
 \end{align*}
 and consider $T^*_{\rho_1}(S^{k-1})\times D_{\rho_2}^{2n-2k} \times I_{\rho_3}$ 
with contact form $\alpha_N = dr - vdu - tds$. Then there is a contactomorphism
\begin{equation*}
\psi \colon N(\Upsilon) \to  T^*_{\rho_1}(S^{k-1})\times D_{\rho_2}^{2n-2k} \times I_{\rho_3}
\end{equation*}
for $\rho_1, \rho_2, \rho_3$ sufficiently small, which maps $\Upsilon$ to the zero-section of $T^*_{\rho_1}(S^{k-1})$.

Let 
\begin{equation*}
D^k = \{(x,y,p,q) \in \Ha^k;  x=y=p= 0\}
\end{equation*}
be the core of the handle $\Ha^k$ and let
\begin{equation*}
 C^{2n-k} = \{(x,y,p,q) \in \Ha^k; q = 0\}
\end{equation*}
be the cocore. Assume that 
\begin{equation*}
 \Lambda \cap \partial C^{2n-k} = \{(x,y,p,q) \in \Ha^k_+; (x,y) \in \Lambda_\sub, p= q = 0\}
\end{equation*}
where $\Lambda_{\sub} \subset S^{2n-2k-1}$ is a Legendrian submanifold with respect to the standard contact structure.   We also assume that $\Lambda \cap \Ha_+^{k}$ is of the form 
  \begin{equation}\label{eq:subhandle}
  \{(r(q)x, r(q)y, p , q) \in \Ha_+^k ; (x,y) \in \Lambda_{\sub}, p = 0\},
  \end{equation}
  where $r \colon D^k \to \R_{\geq 0} $ is a Morse function with exactly one critical point, located at the origin and of index $0$. We also assume that  $\Lambda \cap \Ha_+^k$ is contained in a 1--jet neighborhood of the \emph{standard Legendrian cylinder} $\Lambda_\st \subset \Ha_+^k$, given by 
\begin{align*}
 \Lambda_\st &= \{(x,y,p,q) \in \Ha_+^k; y = p = 0 \} = \{(x,y,p,q) \in \Ha_+^k;\sum_{i=1}^{n-k}x_i^2 = 2(\delta^2 + \sum_{i=1}^k q_i^2)\} \\
 &\simeq S^{n-k-1} \times D^k. 
\end{align*}

By identifying $\Ha^k_+$ with $\Ha^k_-$ using the Liouville flow in $\Ha^k$ and then identifying a region of $\Ha^k_-$ with the attaching region, 
we see that we might assume the projection of $\psi(\Lambda \cap N(\Upsilon))$ to  $T^*_{\rho_1}(S^{k-1})\times I_{\rho_3}$ to coincide with the zero section of $T^*_{\rho_1}(S^{k-1})$ and the projection of $\psi(\Lambda \cap N(\Upsilon_i^K))$ to $D_{\rho_3}^{2n-2k} \setminus D_{\rho_3'}^{2n-2k} \simeq S^{2n-2k-1} \times [\rho_3',\rho_3]$ to coincide with 
\begin{equation}\label{eq:subattach}
 (\rho^2 \Lambda_{\sub},\rho), \quad \rho \in [\rho_3',\rho_3]
  \end{equation} 
  for some $\rho_3'> 0$. See Section \ref{sec:setup}.

From these assumptions it follows that we can cover $\Lambda$ by charts given by Legendrians in $D_a \subset J^1(\R^{n-1})$ and in $J^1(S^{n-k-1} \times D^k)$. In Section \ref{sec:dippings} we will describe an isotopy of $\Lambda$ allowing us to describe $\Al(D_A^H, \Lambda)$ in terms of subalgebras, where  each subalgebra can be computed in one of the 1--jet spaces just described.

\subsection{The differential of $\Al(V, \Lambda)$}
The differential of $\Al(V, \Lambda)$ is a priori given by a count of pseudo-holomorphic curves anchored in $X$ as in Section \ref{sec:fillable}. However, by similar arguments as in \cite{en} and also by the work in \cite{tobdisk} it follows that it is enough to consider pseudo-holomorphic disks in the symplectization of $V$. In this subsection we will investigate these disks further.

Recall that the Chekanov-Eliashberg algebra $\Al(V,\Lambda)$ is generated by the Reeb cords of $\Lambda$. 
By Lemmas \ref{lma:quasi}, \ref{lma:subalg1} and \ref{lma:subalgk} it is enough to consider the following Reeb chords of $\Lambda$.
\begin{description}
\item[Diagram chords] The Reeb chords $a_1, \dotsc, a_{\tilde m}$ of $\Lambda \cap D_a \subset (J^1(\R^{n-1}), dz - y dx)$. 
\item[Handle chords] Let $b_1, \dotsc , b_m$ be the Reeb chords of $\Lambda_\sub$ seen as a Legendrian submanifold of  $J^1(\R^{n-k-1})$, using Lemma \ref{lma:quasi}. Let $b_1[h], \dotsc , b_m[h]$ be the copies of these chords located at $ \Lambda_\sub \times \{0,0\} \subset \Ha^k_+$. 
\end{description}

\begin{remark}
 Here we see the the reason why we have to exclude the $k =(n-1)$-case for the moment, since Lemma \ref{lma:quasi} does not hold for $\Lambda_\sub$ for this $k$.
\end{remark}

We would like to be able to make a similar partition of the pseudo--holomorphic curves which contribute to the differential. To be able to do this, we isotope $\Lambda$ in the attaching region, to introduce a high-dimensional counterpart of the dippings from \cite{sabloffdipp}.

Recall that we assume $\Lambda$ to be of the form \eqref{eq:subhandle} and \eqref{eq:subattach} in $\Ha^k_+$ and $N(\Upsilon)$, respectively. In Section \ref{sec:setup} we prove that we have a sub-algebra $\Al(J^1(\R^{n-k-1}), \Lambda_\sub)$ at the minimum $q=0$ in the handle. However, we might have pseudo-holomorphic disks with positive punctures at diagram chords traveling into the handles. The dipping procedure will help us to get control over these disks.

\subsubsection{Dippings}\label{sec:dippings}
Let $f \colon [\rho_3', \rho_3] \to \R$ be a positive Morse function which coincides with $\rho^2$ for $\rho < \epsilon_1'$ and $\rho > \epsilon_1$ for some $\rho_3' < \epsilon_1' < \epsilon_1 < \rho_3$, and which has one maximum at $\rho = p_1$, one minimum at $\rho = p_2$ for some  $\epsilon_1' < p_1 < p_2 < \epsilon_1$, and no other critical points. Assume that $f(p_1) = p_1^2 + \delta_1$, $f(p_2) = p_2^2 - \delta_2$, where $\epsilon_1, \epsilon_1', p_1, p_2, \delta_1, \delta_2$ are the \emph{dipping parameters} and are to be chosen.

Now we Legendrian isotope 
\begin{equation*}
 \Lambda \cap N(\Upsilon) \simeq \{(\sigma, \rho^2u, \rho^2v, \rho) ; \sigma \in \Upsilon, (u,v) \in \Lambda_\sub, \rho \in [\rho_3', \rho_3]\}
\end{equation*}
to the Legendrian 
\begin{equation*}
\{(\sigma, f(\rho)u, f(\rho)v, \rho) ; \sigma \in \Upsilon, (u,v) \in \Lambda_\sub, \rho \in [\rho_3', \rho_3]\}.
\end{equation*}
To simplify notation we continue to denote the isotoped Legendrian by $\Lambda$. 

If $k>1$ this gives us a Morse--Bott situation where we for each Reeb chord of $\Lambda_\sub$ get one $S^{k-1}$-family of Reeb chords for $\rho = p_1$ and another $S^{k-1}$-family for $\rho = p_2$. To avoid this situation let $g \colon S^{k-1} \to \R$ be a positive Morse function with one maximum at $\sigma_1 \in S^{k-1}$, one minimum at $\sigma_2 \in S^{k-1}$ and no other critical points. Legendrian isotope $\Lambda \cap N(\Upsilon)$ to the Legendrian 
\begin{equation*}
\{(\sigma, (1+\chi(\rho)g(\sigma))f(\rho)u, (1+\chi(\rho)g(\sigma))f(\rho)v, \rho) ; \sigma \in \Upsilon, (u,v) \in \Lambda_\sub, \rho \in [\rho_3', \rho_3]\},
\end{equation*}
where $\chi \colon [\rho_3', \rho_3] \to \R$ is a bump function as in Figure \ref{fig:bump}. We continue to denote the isotoped Legendrian by $\Lambda$. 

By choosing the height $h$ of the bump function $\chi$ small enough we can ensure that we get exactly four critical points for the function
$(1+\chi(\rho)g(\sigma))f(\rho)$ on $S^{k-1} \times [\rho_3', \rho_3]$, as in Figure \ref{fig:morsebott}. That is, we get critical points 
\begin{align*}
 m_1 &= (\sigma_1, p_1) \qquad \text{ of index } k \\
 s_1 &= (\sigma_2, p_1) \qquad \text{ of index } 1 \\
 s_2 &=  (\sigma_1, p_2) \qquad \text{ of index } k-1  \\
 m_2 &= (\sigma_2, p_2) \qquad \text{ of index } 0.
\end{align*}

  \begin{figure}[ht]
 \includegraphics[height=10cm, width=12cm]{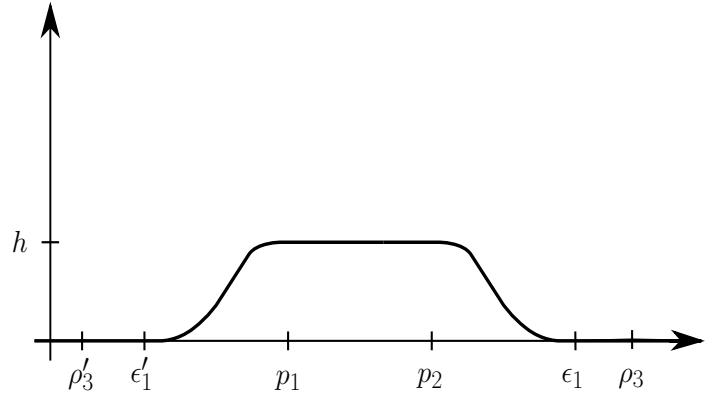}
 \vspace{-3cm}
\caption{The bump function $\chi$.}
 \label{fig:bump}
 \end{figure}
 
   \begin{figure}[ht]
 \centering
\includegraphics[height=10cm, width=13cm]{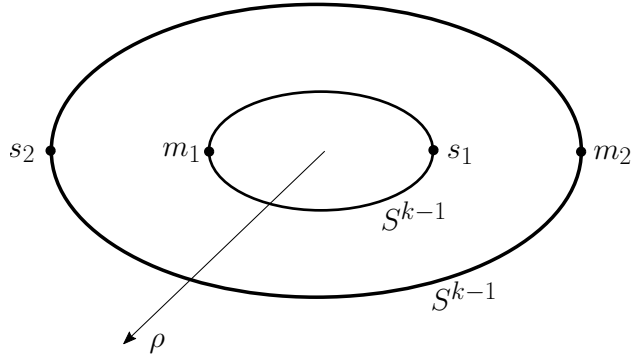}
 \vspace{-4cm}
  \caption{The critical points of the dipping function.}
 \label{fig:morsebott}
 \end{figure}
 
Let 
\begin{equation*}
 A_d = \{(\sigma, u , v, \rho) \in N(\Upsilon); \epsilon_1' < \rho < \epsilon_1\}
\end{equation*}
be the region where the dipping is performed and consider $\Al(A_d, \Lambda \cap A_d)$.
If $1<k<n-1$ this algebra has generators $b_i[m_1], b_i[s_1], b_i[s_2], b_i[m_2]$, $i=1, \dotsc, m$, where $b_1, \dotsc, b_m$ are the generators for $\Al(J^1(\R^{n-k-1}), \Lambda_\sub)$ and where  $b_i[p]$ is the Reeb chord $b_i$ of the copy of $\Lambda_\sub$ located at the critical point $p \in \{m_1, s_1, m_2, s_2\}$.  These Reeb chords are the \emph{dipping chords} of $\Lambda$.

If $k= 1$ we do not need the function $g$, since now $S^{k-1}$ is given by two points $\sigma_0$ and $\sigma_0'$.  It is thus enough to consider the function $f$ defined on $S^{k-1} \times [\rho_3', \rho_3] \simeq D_1 \sqcup D_1$, and we get four critical points

\begin{align*}
 m_1 &= (\sigma_0, p_1) \qquad \text{ of index } 1 \\
 s_1 &= (\sigma_0', p_1) \qquad \text{ of index } 1 \\
 m_2 &= (\sigma_0, p_2) \qquad \text{ of index } 0 \\
 s_2 &= (\sigma_0', p_2) \qquad \text{ of index } 0.
\end{align*}
Hence we get generators $b_i[m_1], b_i[s_1], b_i[s_2], b_i[m_2]$, $i=1, \dotsc, m$ of $\Al(A_d, \Lambda \cap A_d)$, using similar notation as in the $k>1$ case.

We will assume that the dipping region intersects $D_a$ along $\rho = \epsilon_2$, where $\epsilon_2 \in (p_1, p_2)$, so that the critical points $m_2, s_2$ are contained in $D_a$, but not the points $m_1, s_1$.

\subsubsection{Gradings}\label{sec:gradings}
We define gradings of the Reeb chords $b_1, \dotsc, b_m$  of $\Lambda_\sub \subset J^1(\R^{n-k-1})$ as in Section \ref{sec:1jet}.  That is, for 
$\Lambda_1', \dotsc, \Lambda_{s'}'$  the connected components of $\Lambda_\sub$ we choose marked points $q_i' \subset \Lambda_i'$, $i = 1, \dotsc, s'$, and connecting  chords $c_{ij}'$ from $\Lambda_i'$ to $\Lambda_j'$, $i \neq j$, as described in that section, together with admissible paths  $\tilde \gamma_{c_{ij,i}'}'\subset \Lambda_i'$, $\tilde \gamma_{c_{ij,j}'}'\subset \Lambda_j'$ from $q_i'$ to $c_{ij,-}'$  and from $q_j'$ to $c_{ij,+}'$, respectively. 
For each Reeb chord $b$ we also choose capping paths $\gamma_{b \pm}' \subset \Lambda_{\sub}$. 
With this data we can define a grading $|b_i|_\sub$ for $i = 1, \dotsc, m$, given by Definition \ref{def:grading}. 
See Figure \ref{fig:connectingchord}.

To define gradings for the chords $a_1, \dotsc, a_{\tilde m}$, $b_1[p], \dotsc, b_m[p]$, $p = m_1, m_2, s_1,s_2, h$, we proceed as follows. 
Let $\Lambda_1, \dotsc, \Lambda_s$ be the connected components of $\Lambda$. 
For each component $\Lambda_j$ fix a point $q_j \in \Lambda_j \cap D_a$ so that $q_j$ does not project to a singularity under the front projection and so that it does not coincide with a Reeb chord start or end point. Also, for 
each pair $\Lambda_i, \Lambda_j$ such that there is a Reeb chord between them, pick one such chord $c_{ij} \in D_a$ as connecting chord. (Note that this is possible since we assume the dipping region to intersect $D_a$.) 

\begin{definition}
 We say that a path $\gamma \subset \Lambda$ is \emph{handle admissible} if $\gamma \cap D_a$ is admissible and $ \gamma$ has constant projection to $\Lambda_\sub$ in $\Lambda \setminus (\Lambda \cap D_a) \simeq D^k \times \Lambda_\sub$. 
\end{definition}
\begin{definition}
 If $\gamma \subset \Lambda$ is handle admissible we let $D(\gamma) = D(\gamma \cap D_a)$, $U(\gamma) = U(\gamma \cap D_a)$, where $ D(\gamma \cap D_a), U(\gamma \cap D_a)$ is computed as in Definition \ref{def:du}.
\end{definition}

Now choose handle admissible paths as follows. 
\begin{itemize}
 \item For each connecting chord $c_{ij}$ choose paths $\tilde \gamma_{c_{ij,j}}\subset \Lambda_j$ from $p_j$ to $c_{ij,+}$ and $\tilde \gamma_{c_{ij,i}}\subset \Lambda_i$ from  $p_i$ to $c_{ij,-}$. \item For each diagram chord $ a = a_1, \dotsc, a_{\tilde m}$ with $a_\pm \in \Lambda_{l \pm}$ choose capping paths $\gamma_{a\pm} \subset \Lambda_{l \pm}$ from $a_\pm$ to $q_{l\pm}$. 
 \end{itemize}
 
 With these choices is it now possible to define gradings of $a_1, \dotsc, a_{\tilde m}$ as in Definition \ref{def:grading}.
 
To define gradings of the handle and dipping chords we use the following results.
\begin{lemma}\label{lma:gradingb}
 There is a choice of capping paths for $b_1[p], \dotsc, b_m[p]$, $p = m_1, s_1, m_2, s_2,h,$ and a function $K \colon \{1, \dotsc, s'\} \times \{1, \dotsc, s'\} \to \Z$ so that
 \begin{align*}
  |b_i[m_1]| &= |b_i|_\sub + K(i-, i+) + k \\
  |b_i[s_1]| &= |b_i|_\sub + K(i-, i+) + 1 \\
  |b_i[m_2]| &= |b_i|_\sub + K(i-, i+)  \\
  |b_i[s_2]| &= |b_i|_\sub + K(i-, i+) + k-1 \\
  |b_i[h]| &= |b_i|_\sub + K(i-, i+),  
 \end{align*}
$i = 1,\dotsc , m$, and where $i-, i+ \in \{1, \dotsc, s'\}$ satisfies $b_{i, \pm} \in \Lambda_{i\pm}'$. 
\end{lemma}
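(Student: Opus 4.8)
The plan is to compute each grading $|b_i[p]|$ directly from Definition~\ref{def:grading}, exploiting the product structure $\Lambda\setminus(\Lambda\cap D_a)\simeq D^k\times\Lambda_\sub$ together with the explicit dipping function $(1+\chi(\rho)g(\sigma))f(\rho)$. First I would fix, for each copy $b_i[p]$, capping paths that are \emph{handle admissible} and that run along the core directions $D^k$ of the handle with constant projection to $\Lambda_\sub$ until they reach the region $D_a$, where they then continue along a fixed reference path to the marked point $q_{i\pm}$. The key point is that a handle-admissible capping path for $b_i[p]$ differs from one for $b_i[h]$ only by the portion inside $N(\Upsilon)$ that travels between the critical point $p$ of the dipping function and the base level $\rho=\epsilon_2$ (for $p=m_2,s_2$ this portion is short since those points lie in $D_a$; for $p=m_1,s_1$ it crosses the dip). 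Since $D(\gamma)$ and $U(\gamma)$ only see cusp edges of the front projection \emph{inside} $D_a$, and the handle portion of these paths has constant $\Lambda_\sub$-projection, the contributions $D(\gamma_{\pm})-U(\gamma_{\pm})$ to $|b_i[p]|$ will be the same for all four dipped copies and for $h$; I would isolate this common quantity and absorb it, together with the $I_{i-i+}$ term coming from the chosen connecting chord in $D_a$, into a single integer depending only on the pair $(i-,i+)$ — this is the function $K$.

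Next I would compute the Morse-index term $I(b_i[p])$ for each $p$. Here the geometry is that $b_i[p]$ is a critical point of a function difference $f_{j}-f_{i}$ that splits as a sum: the $\Lambda_\sub$-direction contributes a non-degenerate critical point of Morse index equal to $I(b_i)$ (the index used to define $|b_i|_\sub$, up to a correction that again only depends on $(i-,i+)$ and gets folded into $K$), while the extra $S^{k-1}\times[\rho_3',\rho_3]$ directions contribute the Morse index of $(1+\chi(\rho)g(\sigma))f(\rho)$ at the critical point $p$. By the computation recorded just before the lemma, those indices are $k$ at $m_1$, $1$ at $s_1$, $k-1$ at $s_2$, $0$ at $m_2$, and $0$ at the handle minimum $h$ (which sits at $q=0$, the interior minimum of $r$). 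Plugging $I(b_i[p]) = I(b_i) + (\text{index of dipping function at }p) + (\text{const depending on }i\pm)$ into Definition~\ref{def:grading}, the constant pieces reassemble into $|b_i|_\sub + K(i-,i+)$ and the variable piece is exactly the claimed shift $k,\,1,\,k-1,\,0,\,0$.

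The main obstacle I expect is bookkeeping the cusp-edge count along the capping paths as they pass through the dip: the isotopied $\Lambda$ has new front singularities created by the dipping function, and I must check that a handle-admissible capping path can be chosen to cross these in a way that the net $D(\gamma)-U(\gamma)$ difference between the copies is accounted for purely by the Morse-index change — equivalently, that there is no extra cusp-edge contribution distinguishing, say, $b_i[m_1]$ from $b_i[s_1]$ beyond what the index difference $k-1$ already records. This is precisely the kind of statement that the dimension/grading formula for flow trees is built to respect, so I would verify it by a local model computation near the four critical points, using that near each $p$ the Legendrian is a $1$-jet graph over a neighborhood in $S^{k-1}\times[\rho_3',\rho_3]\times\R^{n-k-1}$ of the defining function $(1+\chi g)f$ plus the $\Lambda_\sub$-functions, so the front is a product and the cusp edges in the $S^{k-1}\times[\rho_3',\rho_3]$ factor contribute in a controlled, index-matching way. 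Finally I would check well-definedness: the formula must be independent of the auxiliary choices (marked points, connecting chords, capping paths) up to the ambiguity already present in $|b_i|_\sub$, which holds because any two handle-admissible paths between the same endpoints differ by a loop whose $D-U$ count and homotopy class are pinned down exactly as in Section~\ref{sec:1jet}.
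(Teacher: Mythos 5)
Your proposal is correct and follows essentially the same route as the paper: choose handle admissible capping paths for the copies $b_i[p]$ that agree except near the critical point $p$ (so the cusp counts $D-U$, which by definition only see $\gamma\cap D_a$, are common to all copies and get absorbed, together with the connecting-chord data, into $K(i-,i+)$), and observe that the Morse index of the local function difference at $b_i[p]$ splits off the index of the dipping function at $p$, namely $k,1,k-1,0,0$ for $m_1,s_1,s_2,m_2,h$. The paper's proof is just more explicit about the case analysis for the connecting chords ($i-=i+$ versus $i-\neq i+$ and the orientation of $c_{i-i+}'$) and writes out $K$ in closed form, while your worry about extra cusp contributions is dispatched there simply by choosing the extensions of the capping paths to avoid all cusps.
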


\begin{proof} 
To choose the capping paths, we first pick handle admissible paths as follows.
\begin{itemize}
 \item For each component $\Lambda_j'$ of $\Lambda_\sub$ let $l$ be such that $\Lambda_j' \subset \Lambda_l$ and let $\tilde \gamma_{jl} \subset \Lambda_l$ be a handle admissible path from $\{m_2\} \times \{q_j'\}\in A_d \cap D_a$ to $q_l$. 
 \item For each connecting Reeb chord $c_{ij}'$ of $\Lambda_\sub$, let $l\pm$ be such that $\Lambda_i'\subset \Lambda_{l-}, \Lambda_j' \subset \Lambda_{l+}$, and choose handle admissible paths $\tilde \gamma_{il-,\con} \subset \Lambda_{l-}, \tilde \gamma_{jl+,\con} \subset \Lambda_{l+}$ from $\{m_2\} \times \{c_{ij,-}'\}$ to $q_{l-}$ and from $\{m_2\} \times \{c_{ij,+}'\}$ to $q_{l+}$, respectively.

 \end{itemize}
 
  Let $\gamma_1*\gamma_2$ be the concatenation of the paths $\gamma_1$ and $\gamma_2$. To simplify notation, if $\gamma \subset \Lambda_\sub$ is a path we continue to write $\gamma$ for the copy $\{m_2\}\times \gamma$ of $\gamma$ in $\{m_2\} \times \Lambda_{sub}$.

To define the capping path $\gamma_{b\pm}$ for the Reeb chord $b = b_i[m_2]$, $i = 1, \dotsc, m$, assume that $b_{i,\pm} \in \Lambda_{i\pm}' \subset \Lambda_{l\pm}$. 
 We get the following cases, see Figures \ref{fig:case1}, \ref{fig:case2} and \ref{fig:case3}.
 \begin{align*}
  \gamma_{b\pm} =
  \begin{cases}
   \gamma_{b_i\pm}' * \tilde \gamma_{i\pm l\pm}, & i-= i+, \\
   \gamma_{b_i\pm}' *\tilde \gamma_{c_{i-i+,i\pm}'}'* \tilde \gamma_{i\pm l\pm,\con}, & i-\neq i+, \text{ the connecting chord of }\Lambda_{i-}', \Lambda_{i+}' \text{equals  } c_{i-i+}', \\
   \gamma_{b_i\pm}' *\tilde \gamma_{c_{i+i-,i\pm}'}'* \tilde \gamma_{i\pm l\pm,\con}, & i-\neq i+, \text{ the connecting chord of }\Lambda_{i-}', \Lambda_{i+}' \text{equals  } c_{i+i-}'.
  \end{cases}
 \end{align*}

 To define capping paths for the chords $b_i[p]$, $p = s_1,m_1,s_2,h$ we take the capping path of $b_i[m_2]$ and just extend it in a handle admissible way for $i = 1, \dotsc, m$ so that the extended parts do not intersect any cusps.

 Write $I'(c_{ij}')$ for the Morse index of $c_{ij}'$ regarded as a Reeb chord of $\Lambda_\sub$, and $I(c_{ij})$ for the Morse index of $c_{ij}$ regarded as a Reeb chord of $\Lambda$.
 
Now let  $K \colon \{1, \dotsc, s'\} \times \{1, \dotsc, s'\} \to \Z$ be given by 
 \begin{equation*}
    K(i-,i+) = 0,  \qquad i-= i+,
 \end{equation*}
 \begin{align*}
  K(i-,i+) =
   D(\tilde \gamma_{i+ l+,\con}) - U(\tilde \gamma_{i+ l+,\con}) - D(\tilde \gamma_{i- l-,\con}) +U(\tilde \gamma_{i- l-,\con}) + I'(c_{i-i+}) + I_{l-l+},\\ i-\neq i+,  \text{ the connecting chord of }\Lambda_{i-}', \Lambda_{i+}' \text{equals  } c_{i-i+}', 
 \end{align*}
 \begin{align*}
  K(i-,i+) =
   D(\tilde \gamma_{i+ l+,\con}) - U(\tilde \gamma_{i+ l+,\con}) - D(\tilde \gamma_{i- l-,\con}) + U(\tilde \gamma_{i- l-,\con}) - I'(c_{i+i-}) +  I_{l-l+},\\ i-\neq i+,  \text{ the connecting chord of }\Lambda_{i-}', \Lambda_{i+}' \text{equals  } c_{i+i-}', 
 \end{align*}
 where $I_{l-l+}$ is given by \eqref{eq:I1} and \eqref{eq:I2} and computed with respect to the connecting chords in $\Lambda$.   
 
 Comparing with Figures \ref{fig:case1}, \ref{fig:case2} and \ref{fig:case3} it is clear that the lemma follows. 
 \end{proof}

 \begin{figure}[ht]
 \includegraphics[height=10cm, width=13cm]{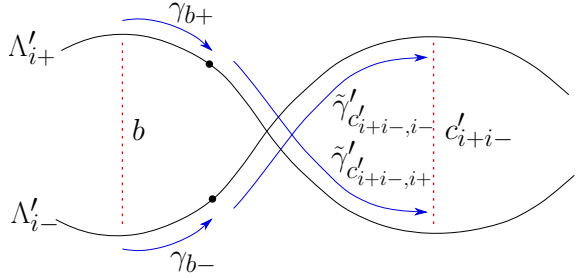}
 \vspace{-5cm}
 \caption{The choice of admissible paths for $\Lambda_\sub$.}
 \label{fig:connectingchord}
 \end{figure}

 \begin{figure}[ht]
 \includegraphics[height=10cm, width=13cm]{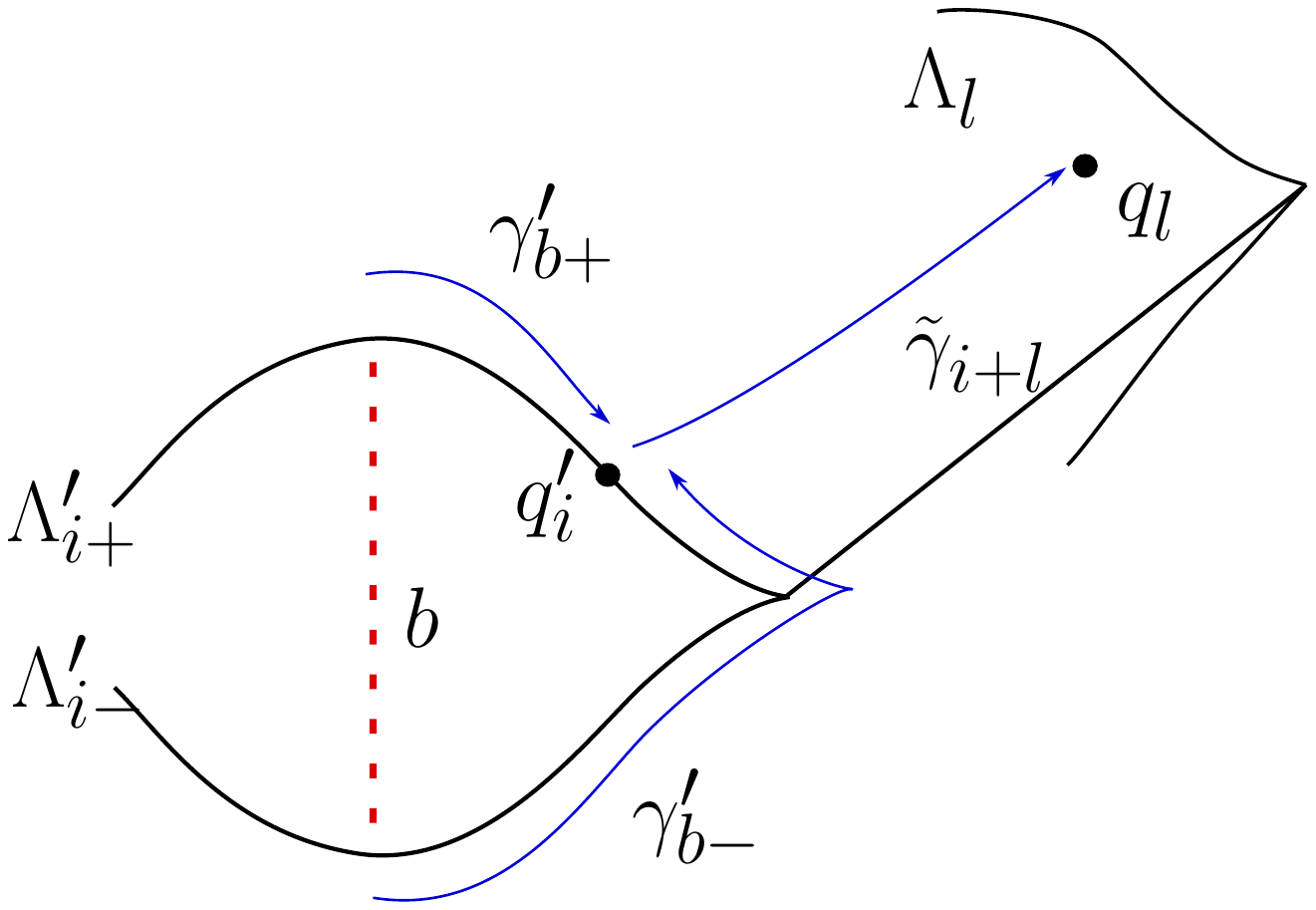}
 \vspace{-5cm}
 \caption{The choice of capping paths for $b[m_2]$ in $\Lambda$ in the case when when $i+ = i-$. }
 \label{fig:case1}
  \end{figure}

 \begin{figure}[ht]
 \includegraphics[height=10cm, width=13cm]{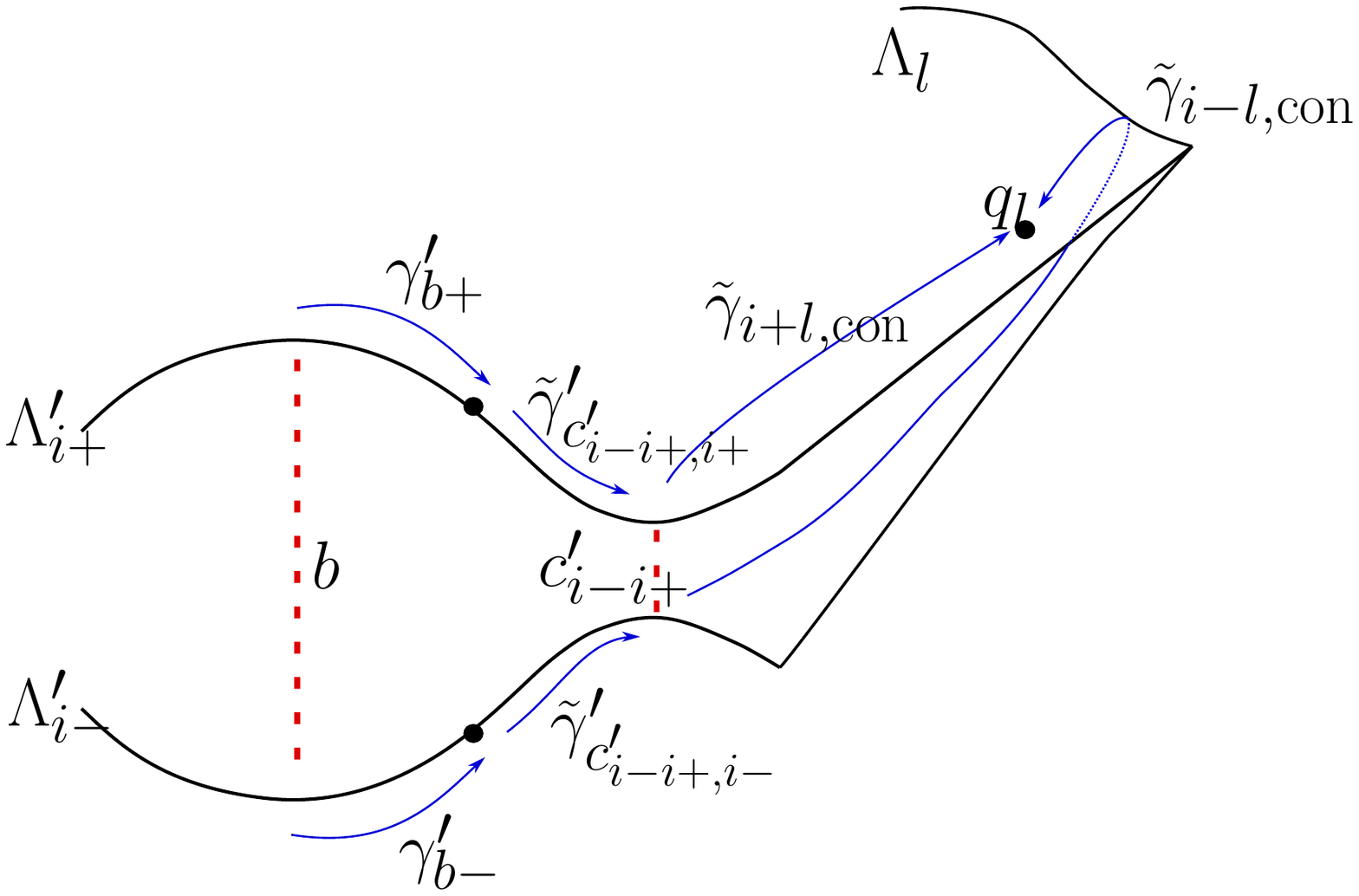}
 \vspace{-3cm}
 \caption{The choice of capping paths for $b[m_2]$ in $\Lambda$ in the case when when $i+ \neq i-$, $l+ = l-$ .}
 \label{fig:case2}
  \end{figure}

 \begin{figure}[ht]
 \vspace{-3cm}
\includegraphics[height=10cm, width=13cm]{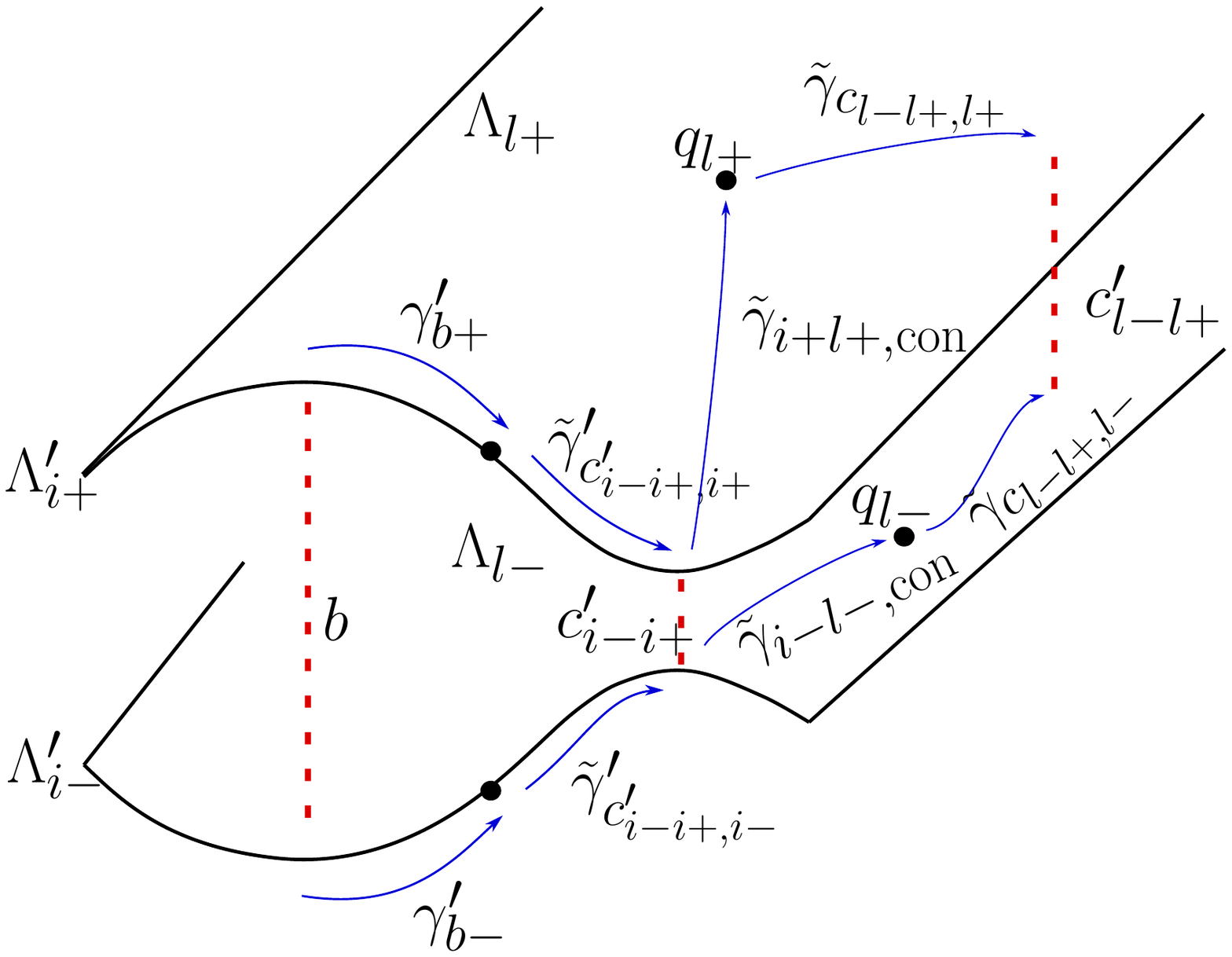}
 \vspace{1cm}
\caption{The choice of capping paths for $b[m_2]$ in $\Lambda$ in the case when  $i+ \neq i-$, $l+ \neq l-$. }
 \label{fig:case3}
 \end{figure}

\subsubsection{Almost complex structure}
To choose an almost complex structure for $\R_t \times D_A^H$ we proceed as in [\cite{en}, Section 5.2.A]. 
That is, in $D_a \subset J^1(\R^{n-1}) \simeq \C^{n-1} \times \R_z$  we choose the standard complex structure on $\C^{n-1}$ and then extend it to the whole space by requiring that $J(\partial_t) = \partial_z$.

In the handle we choose an almost complex structure which is standard in the 1--jet neighborhood of $\Lambda_\st$ where we assume that $\Lambda \cap \Ha^k$ is contained. 
This means that for $J^1(\Lambda_\st) \simeq T^*\Lambda_\st \times \R_z$ we assume it to be given as in [\cite{trees}, Section 4.3] in $T^*\Lambda_\st$, mapping the vertical subbundle of $T^*\Lambda_\st$ to the horizontal subbundle, where these subbundles are defined using some metric connection. 
Again we extend it to a cylindrical almost complex structure by setting $J(\partial_t) = \partial_z$. 
Extend this to a cylindrical almost complex structure in the rest of the handle. 

Assuming that $\Lambda_\st \cap D_a \subset N(\Upsilon)$ is contained in a real plane $(\R^{n-1} \times \{y_0\} \times \{z_0\})\cap D_a$, the almost complex structure defined in the handle will coincide with the almost complex structure in $\R \times D_a$ in the $1$--jet neighborhood of $\Lambda_\st$, assuming this is small enough, and we can interpolate the almost complex structures outside this neighborhood to give a cylindrical almost complex structure defined over the whole of $\R \times D_A^H$.  

\subsubsection{Description of the differential}\label{sec:differential}
We are now ready to state the main result of this paper. Recall that $\Lambda$ now represents the dipped version of the link of Legendrian spheres.

Let 
\begin{equation*}
\Al_D = \Al(J^1(\R^{n-1}),\Lambda \cap D_a), \qquad\Al_H = \Al(J^1(\Lambda_\st), \Lambda \cap \Ha^k_+)
\end{equation*}
where we view the whole dipping region as contained in $\Ha^k$, and the part where the minimum in the $\rho$-direction is obtained as also contained in $D_a$. Moreover, let $\Al_S$ be the DGA generated by the chords $b_1[p], \dotsc, b_m[p]$, $p = m_2, s_2$, and with differential given by the restriction of the differential on  $\Al(D_A^H, \Lambda)$.   

\begin{proposition}\label{prp:prp1}
The DGAs $\Al_D, \Al_H$ are sub-DGAs of $\Al(D_A^H, \Lambda)$ and $\Al_S$ is a sub-DGA of both $\Al_D$ and $\Al_H$. Moreover, $\Al(D_A^H, \Lambda)$ is the pushout of the inclusion maps of these sub-DGAs, that is, it is the pushout of the following diagram:
\begin{equation}\label{eq:pushout1}
\begin{tikzcd} 
	\Al_S \arrow {d}{i} \arrow {r}{i} & \Al_H \arrow {d}{i} \\
	\Al_D \arrow {r}{i} & \Al(D_A^H, \Lambda).
\end{tikzcd}
\end{equation}

\end{proposition}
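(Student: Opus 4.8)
The plan is to verify three things: (i) that $\Al_D$ and $\Al_H$ are each closed under the differential of $\Al(D_A^H,\Lambda)$, hence genuine sub-DGAs; (ii) that $\Al_S$ is contained in both and closed under the differential; and (iii) that the natural map from the pushout $\Al_D \ast_{\Al_S} \Al_H$ to $\Al(D_A^H,\Lambda)$ is an isomorphism of DGAs. For (iii), note that as an algebra $\Al(D_A^H,\Lambda)$ is freely generated by the diagram chords $a_1,\dotsc,a_{\tilde m}$, the handle chords $b_1[h],\dotsc,b_m[h]$, and the dipping chords $b_i[m_1],b_i[s_1],b_i[m_2],b_i[s_2]$; meanwhile $\Al_D$ is freely generated by the $a_j$ together with $b_i[m_2],b_i[s_2]$, and $\Al_H$ is freely generated by the $b_i[h],b_i[m_1],b_i[s_1]$ together with $b_i[m_2],b_i[s_2]$, while $\Al_S$ is freely generated by the common chords $b_i[m_2],b_i[s_2]$. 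So the generating sets of $\Al_D$ and $\Al_H$ overlap precisely in the generators of $\Al_S$ and their union is the full generating set; since all three algebras are free, the pushout is freely generated by this union, so the map to $\Al(D_A^H,\Lambda)$ is an algebra isomorphism. Thus the entire content is in the differential-compatibility statements (i) and (ii), i.e.\ that the differential respects this decomposition.

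First I would set up the ``action/length filtration'' and ``region'' bookkeeping for the rigid pseudo-holomorphic disks (equivalently, via \cite{trees} and the product structure of the chosen $J$, the rigid Morse flow trees) contributing to $\partial$ on $\Al(D_A^H,\Lambda)$. The key geometric input is the placement of the dipping: the bump function $\chi$ and the function $f$ were chosen so that the minimum region (containing $m_2,s_2$) sits inside $D_a$ while the maximum region (containing $m_1,s_1$) and the handle copies $b_i[h]$ lie strictly in $\Ha^k_+ \setminus D_a$. I would then argue, exactly as in the relevant lemmas of \cite{en} and using the Morse–Bott / dipping analysis of \cite{sabloffdipp} adapted to the higher-dimensional setting of Section \ref{sec:dippings}, that a rigid disk with positive puncture at a diagram chord $a_j$ (or at a chord $b_i[m_2], b_i[s_2]$) cannot have a negative puncture at a ``handle-interior'' chord $b_i[h], b_i[m_1], b_i[s_1]$: any such disk would have to cross the dip, and the standard argument (a holomorphic disk or flow tree entering the handle through the dip region with a positive puncture outside must, by the gradient/monotonicity behaviour of $f$ across $p_1,p_2$, close up before reaching the inner chords, or else fails to be rigid) rules this out. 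Symmetrically, a disk with positive puncture at a handle-interior chord stays in $\Ha^k_+$ and its negative punctures are again handle or $\Al_S$ chords. This shows $\partial(\Al_D)\subset\Al_D$ and $\partial(\Al_H)\subset\Al_H$.

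For $\Al_S$: a rigid disk with positive puncture at $b_i[m_2]$ or $b_i[s_2]$ is, by the above, simultaneously a disk for $\Al_D$ and for $\Al_H$, and by Lemma \ref{lma:subalg1}/\ref{lma:subalgk} (the analysis identifying the sub-algebra coming from $\Lambda_\sub$ at the dip minimum) all its negative punctures are again among the $b_\bullet[m_2],b_\bullet[s_2]$; this is where the constraint $1<k<n-1$ and the grading formulas of Lemma \ref{lma:gradingb} enter, since they guarantee $m_1$ and $s_1$ cannot appear as outputs of $m_2,s_2$ for dimension reasons while $m_2,s_2$ can. Hence $\partial(\Al_S)\subset\Al_S$ and the restricted differential agrees with those of $\Al_D$ and $\Al_H$, so $\Al_S$ is a sub-DGA of both. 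Combining with the free-generation observation gives the pushout statement. \textbf{The main obstacle} I expect is (i): carefully proving that no rigid disk/flow tree with an ``outer'' positive puncture reaches an ``inner'' negative puncture across the dip — this requires the high-dimensional analogue of the dipping estimates of \cite{sabloffdipp}, controlling both the behaviour in the $\rho$-direction (governed by $f$) and in the $S^{k-1}$-direction (governed by $g$ and the bump $\chi$), and is precisely the technical heart deferred to Section \ref{sec:setup}; everything else is essentially formal once that containment of disks is in hand.
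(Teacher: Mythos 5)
Your overall strategy coincides with the paper's: classify the rigid disks (equivalently, flow trees) by the location of their positive puncture — diagram, handle, dipping-maximum, dipping-minimum — use the dip to confine each class, and then read off the pushout from the fact that the generating sets of $\Al_D$ and $\Al_H$ cover all generators of $\Al(D_A^H,\Lambda)$ and intersect exactly in those of $\Al_S$; that part is fine and matches Section \ref{sec:differential}.

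The one place where your justification diverges from the paper's and does not hold up is the closure of $\Al_S$ under the differential. You argue that $b_j[m_1]$ and $b_j[s_1]$ cannot occur as negative punctures of a rigid disk with positive puncture at $b_i[m_2]$ or $b_i[s_2]$ ``for dimension reasons'' via the degree shifts of Lemma \ref{lma:gradingb}. Those shifts only compare the different copies of one fixed chord $b_i$; since the intrinsic gradings $|b_j|_\sub$ of distinct chords of $\Lambda_\sub$ are unconstrained (they may be negative), degree bookkeeping alone cannot exclude, say, a negative puncture at $b_j[m_1]$ with $j\neq i$ compensated by other low-degree punctures, and an action estimate runs into the same problem. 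The paper instead argues geometrically: passing to Morse flow trees, a tree with positive puncture at a minimum chord cannot leave the locus $S^{k-1}\times\Lambda_\sub\times\{p_2\}$, because the gradient of the dipping function points into the minimum, so all its negative punctures are again chords $b_\bullet[m_2], b_\bullet[s_2]$. This is exactly the confinement mechanism you already invoke for diagram disks, so the repair is simply to apply it to the minimum chords as well rather than appealing to gradings; also note the paper's standing restriction is $k<n-1$ (with $k=1$ allowed and treated separately), not $1<k<n-1$.
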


Since Lemma \ref{lma:quasi} says that $\Al(V,\Lambda)$ is quasi-isomorphic to  $\Al(D_A^H, \Lambda)$, we obtain our main theorem.

\begin{corollary}[Theorem \ref{thm:main}]
 The DGA $\Al(V,\Lambda)$ is quasi-isomorphic to the pushout of the following diagram 
 \begin{equation*}
\begin{tikzcd} 
	\Al_S \arrow {d}{i} \arrow {r}{i} & \Al_H  \\
	\Al_D, & 
\end{tikzcd}
\end{equation*} 
 where $i$ is the inclusion and where $\Al_S, \Al_D, \Al_H$ can be computed from Legendrians in one-jet spaces.
\end{corollary}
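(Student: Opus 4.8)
The statement follows by combining the two preceding results. Lemma~\ref{lma:quasi} gives, under the geometric assumptions, a quasi-isomorphism $\Al(V,\Lambda)\underset{\mathrm{quasi}}{\simeq}\Al(D_A^H,\phi(\Lambda))$, after which we drop $\phi$ and regard $\Lambda\subset D_A^H$. Proposition~\ref{prp:prp1} then exhibits $\Al(D_A^H,\Lambda)$ as the pushout of $\Al_D\xleftarrow{i}\Al_S\xrightarrow{i}\Al_H$, i.e.\ as the lower-right corner of~\eqref{eq:pushout1}. Since the pushout of the two-arrow diagram in Theorem~\ref{thm:main} is by definition this corner, the Corollary is immediate, and the assertion that $\Al_S,\Al_D,\Al_H$ are computable from Legendrians in $1$-jet spaces is already recorded in Section~\ref{sec:differential}, where $\Al_D=\Al(J^1(\R^{n-1}),\Lambda\cap D_a)$, $\Al_H=\Al(J^1(\Lambda_\st),\Lambda\cap\Ha^k_+)$, and $\Al_S$ is a sub-DGA of each. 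So the real content is Proposition~\ref{prp:prp1}, and the rest of this plan sketches how I would prove \emph{it}.

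First I would reduce the differential of $\Al(V,\Lambda)$ from a count of curves anchored in $X$ to a count of disks in the symplectization $\R_t\times V$, using $c_1(X)=0$ together with the arguments of \cite{en} and \cite{tobdisk}; via Lemma~\ref{lma:quasi} this places us in $\R_t\times D_A^H$ with the cylindrical almost complex structure built in Section~\ref{sec:differential} (standard on $\C^{n-1}$ over $D_a$, of tree-compatible type near $\Lambda_\st$ inside the handle, interpolated in between). Next I would pin down the generators: by Lemmas~\ref{lma:quasi}, \ref{lma:subalg1} and \ref{lma:subalgk} the Reeb chords of the dipped $\Lambda$ are the diagram chords $a_1,\dots,a_{\tilde m}$ together with the handle and dipping chords $b_i[p]$, $p\in\{m_1,s_1,m_2,s_2,h\}$; the chords lying in $D_a$ are exactly the $a_i$ and the $b_i[m_2],b_i[s_2]$ (since the dipping region meets $D_a$ along $\rho=\epsilon_2\in(p_1,p_2)$, so $m_2,s_2$ are interior to $D_a$ but $m_1,s_1$ are not), the chords lying in $\Ha^k_+$ are all the $b_i[p]$, and the overlap $\{b_i[m_2],b_i[s_2]\}$ is precisely the generating set of $\Al_S$. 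Hence $\Al(D_A^H,\Lambda)$ is freely generated over $\Z_2[H_2(X,\Lambda)]$ by the union of the generators of $\Al_D$ and $\Al_H$ amalgamated along those of $\Al_S$, which is the underlying algebra of the pushout.

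The heart of the matter is then to show that the rigid holomorphic disks split cleanly along this decomposition: every rigid disk either lies entirely in $\R\times D_a$, where it is counted by $\Al_D$, or entirely in $\R\times\Ha^k_+$, where it is counted by $\Al_H$; equivalently, the differential restricts to each sub-DGA and the two restrictions agree on $\Al_S$. The obvious danger — a disk with positive puncture at a diagram chord that wanders into the handle — is exactly what the dipping of Section~\ref{sec:dippings} is designed to obstruct. Here I would argue with an action/energy estimate forcing any disk that crosses the neck $\{\epsilon_1'<\rho<\epsilon_1\}$ to acquire a negative puncture at one of $m_1,s_1,m_2,s_2$; a neck-stretching and Gromov-compactness argument across the hypersurfaces separating $D_a$, the dipping region and the handle, so that limit buildings have pieces confined to the respective regions; and the grading shifts of Lemma~\ref{lma:gradingb} ($k,1,0,k-1$ at $m_1,s_1,m_2,s_2$) to rule out the undesired rigid configurations on dimensional grounds. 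One must also check the two identifications with $1$-jet DGAs — that near $\Lambda_\st$ the handle disks coincide with those of $\Al(J^1(\Lambda_\st),\cdot)$, invoking the $1$-jet theory of Section~\ref{sec:1jet} and, if convenient, its reformulation via Morse flow trees \cite{trees}, and likewise for $\Al_D$ inside $D_a\subset J^1(\R^{n-1})$ — after which the universal property of the pushout in the category of semifree DGAs over $\Z_2$ is formal. The step I expect to be genuinely hard is precisely this disk-splitting: controlling disks with positive punctures at diagram chords that attempt to pass through the dipping region, which is the whole reason the high-dimensional dipping procedure is introduced.
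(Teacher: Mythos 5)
Your proof of the Corollary is exactly the paper's: it follows immediately by combining Lemma~\ref{lma:quasi} (which gives $\Al(V,\Lambda)\underset{\mathrm{quasi}}{\simeq}\Al(D_A^H,\Lambda)$) with Proposition~\ref{prp:prp1} (which identifies $\Al(D_A^H,\Lambda)$ as the pushout of $\Al_D\leftarrow\Al_S\rightarrow\Al_H$). Your supplementary sketch of Proposition~\ref{prp:prp1} also tracks the paper's actual argument — the same classification into diagram, handle, dipping and minimum disks, with confinement established there via Morse flow trees and action estimates (Lemmas~\ref{lma:inhandle}, \ref{lma:subalg1}, \ref{lma:subalgk}) rather than neck-stretching, and the pushout then read off from the generator sets and the restricted differentials.
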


\begin{proof}[Proof of Proposition \ref{prp:prp1}]

Similar to \cite{en} the differential  of $ \Al(D_A^H, \Lambda)$ can be given by a count of pseudo-holomorphic disks in $\R \times D_a^H$ with boundary on $\R \times \Lambda$. After having introduced the dipping region this count reduces to the following.

\begin{description}
 \item[Diagram disks] Disks that have positive puncture at the diagram chords $a_1, \dotsc, a_{\tilde m}$. These disks cannot leave $\R \times D_a$ because of the dipping (they cannot pass through $S^{k-1} \times \Lambda_\sub \times \{p_2\}$), which is easily seen by switching to Morse flow trees instead of pseudo-holomorphic disks. Hence they can only have negative punctures at other diagram chords  or at the chords $b_i[p]$, $i=1, \dotsc, m$, $p= s_2,m_2$ (reall that these chords are assumed to be contained in $D_a$).

\item[Handle disks]
These are disks with positive punctures at the handle chords. In Section \ref{sec:setup} we prove that we only have to consider  disks with negative punctures at handle chords.

\item[Dipping disks]
These are pseudo-holomorphic disks of $\Lambda$ having positive puncture at some  $b_i[m_1]$ or $b_i[s_1]$, $i=1, \dotsc, m$.

The only way for these disks to leave the dipping region is to enter the handle. If the parameters for the dipping function $f$ is chosen sufficiently small it follows by action reasons that these disks cannot leave a 1--jet neighborhood of $\Lambda_\st \subset \Ha^k_+$ and hence we can use the techniques from Section \ref{sec:1jet} to find the pseudo-holomorphic disks. Note that we may identify the whole dipping area with a subset of a 1--jet neighborhood of $\Lambda_\st \subset \Ha^k_+$ if the dipping parameters are small enough. 

In particular, it follows that these disks can only have negative punctures at dipping chords or at handle chords.

\item[Minimum disks]  These are disks with positive punctures at some $b_i[p]$, $i=1, \dotsc, m$, $p= s_2,m_2$. By passing to Morse flow trees we see that these disks cannot leave $S^{k-1} \times \Lambda_\sub \times \{p_2\}$, and in particular these disks can only have negative punctures at the chords  $b_i[p]$, $i=1, \dotsc, m$, $p= s_2,m_2$.
\end{description}

From this it follows that $\Al_D$ and $\Al_H$ are sub-DGAs of $\Al(D_A^H, \Lambda)$ and that $\Al_S$ is a sub-DGA of both  $\Al_D$ and $\Al_H$.

To prove that \eqref{eq:pushout1} describes $\Al(D_A^H, \Lambda)$ as a pushout, we introduce the notation $|\Al|$ for the set of generators of the DGA $\Al$. 
Now, since 
\begin{align*}
|\Al(D_A^H, \Lambda)| = |\Al_H| \cup |\Al_D|, \\  
|\Al_S|= |\Al_H| \cap |\Al_D|
\end{align*}
it follows that $\Al(D_A^H, \Lambda)$ is the pushout of \eqref{eq:pushout1} as an algebra. Since the disks occuring in the differential of $\Al(D_A^H, \Lambda)$ are of the form listed above, it follows that if $a \in |\Al(D_A^H, \Lambda)|$, then
\begin{align*}
\partial(a) &= \partial|_{\Al_D}(a), \qquad \text{if } a \in |\Al_D|\\
\partial(a) &= \partial|_{\Al_H}(a), \qquad \text{if } a \in |\Al_H|.
\end{align*}
And since $\partial|_{\Al_D} = \partial|_{\Al_H}$ on $\Al_S$ it follows that $\Al(D_A^H, \Lambda)$ is the pushout of \eqref{eq:pushout1} also as a DGA. 
\end{proof}

\begin{remark}
Note that we have four copies of $\Al(J^1(\R^{n-k-1}), \Lambda_\sub)$ in the dipping region, and one in the handle. The copies sitting at $h$ and $m_2$ will be subalgebras, but the other ones will not (except in the case when $k=1$, where we also get a subalgebra at $s_2$). However, it is possible to describe how these copies interact with each other using the techniques of Morse-Bott cascades from \cite{tobkal}. This will be done in a forthcoming paper, and with this we also get a tool to treat the $k=(n-1)$--case.
\end{remark}

It follows that the Legendrian contact homology of $\Lambda$ in $V$ can be computed using 1--jet space techniques. It also follows that the coefficients reduce form  $\Z_2[H^2(X,\Lambda)]$ to $\Z_2$ since no disk passes through a handle.

\section{The sub-DGA in the handle}\label{sec:setup}
In this section we prove that we get a sub-DGA of $\Al(D_A^H, \Lambda)$ generated by the Reeb chords in  the cocore of the handle. To do this, we will modify the model of the handles from Section \ref{sec:geometry} slightly, to simplify the Reeb dynamics.

\subsection{Geometry of a $2n$-dimensional symplectic handle of index k}

Let \linebreak $a_1, \dotsc, a_{n-k} \in \R$ be some positive constants that are linearly independent over $\Q$ and define
\begin{equation*}
 H_\delta^k =\{(x,y,p,q)\in \R^{2n-2k} \times \R^{2k}: -\delta^2 \leq \sum_{j=1}^{n-k}\frac{a_j}{2}(x_j^2 + y_j^2) +  \sum_{j=1}^k 2p_j^2 - q_j^2\leq \delta^2\}. 
\end{equation*}

The handle still has Liouville vector field
\begin{equation}
 Z = \sum_{j=1}^{n-k}\frac{1}{2}(x_j \partial_{x_j} + y_j\partial_{y_j}) + \sum_{j=1}^k  2p_j\partial_{p_j} - q_j\partial_{q_j}
\end{equation} 
which is transverse to the boundary
\begin{equation*}
\Ha_{\pm}^k= \{(x,y,p,q)\in \R^{2n-2k} \times \R^{2k}:  \sum_{j=1}^{n-k}\frac{a_j}{2}(x_j^2 + y_j^2) +  \sum_{j=1}^k 2p_j^2 - q_j^2 = \pm \delta^2\},
\end{equation*}
and points out of $H_\delta^k$ along $\Ha_+^k$ and into $H_\delta^k$ along $\Ha_{-}^k$. 
Note that, topologically we still have $\Ha_{-}^k \simeq \R^{2n-k} \times S^{k-1}$, $\Ha_+^k \simeq S^{2n-k-1} \times \R^{k}$. 

The Liouville vector field induces contact forms $\alpha_{\pm\delta}$ on $\Ha_{\pm}^k$:
\begin{equation}
 \alpha_{\pm \delta} = \omega_{st}(Z,\bullet)|_{\Ha_{\pm}^k} = \sum_{j=1}^{n-k}\frac{1}{2}(x_j d{y_j} - y_jd{x_j}) +  \sum_{j=1}^k 2p_jd{q_j} + q_jd{p_j},
\end{equation}
with Reeb vector field given by $N\tilde R$, where
\begin{equation}
 \tilde R =  \sum_{j=1}^{n-k}\frac{a_j}{2}(x_j \partial_{y_j} - y_j\partial_{x_j}) +  \sum_{j=1}^{k}2p_j\partial_{q_j} + q_j\partial_{p_j}|_{\Ha_{\pm}^k}
\end{equation} 
and
\begin{equation*}
 N = \left(\sum_{j=1}^{n-k}\frac{a_j}{4}(x_j^2+y_j^2) + \sum_{j=1}^{k}4p_j^2 + q_j^2\right)^{-1}.
\end{equation*}

It follows that the differential equation for the Reeb flow is given by 
\begin{align*}
 \dot x_j & = - \frac{Na_j}{2}y_j &  \dot y_j & = \frac{Na_j}{2}x_j  & j =1,\dotsc, n-k,\\
 \dot p_j & = Nq_j &   \dot q_j & = 2Np_j & j = 1,\dotsc,k. 
\end{align*}

Hence we get that the time $t$ Reeb flow  $\Phi^t_R=(x(t),y(t),p(t),q(t))$ is given by 
\begin{align}\label{eq:R1}
 x_j(t) &= x_j(0) \cos\left(\frac{Na_j}{2}t\right) - y_j(0) \sin\left(\frac{Na_j}{2}t\right), \qquad j =1, \dotsc, n-k, \\ \label{eq:R2}
 y_j(t) &=  x_j(0) \sin\left(\frac{Na_j}{2}t\right) + y_j(0) \cos\left(\frac{Na_j}{2}t\right), \qquad j =1, \dotsc, n-k,\\ \label{eq:R3}
 p_j(t) &= p_j(0) \cosh(N\sqrt{2}t) + \frac{1}{\sqrt{2}}q_j(0) \sinh(N\sqrt{2}t), \qquad j =1, \dotsc, k, \\ \label{eq:R4}
 q_j(t) &= \sqrt 2 p_j(0) \sinh(N\sqrt{2}t) +q_j(0) \cosh(N\sqrt{2}t), \qquad j =1, \dotsc, k. 
\end{align}

Let us now consider some special cases.

\subsection{Index 1 handles}
In this case the attaching region is given by two disjoint balls of dimension $2n-1$. 
We describe models for the attachment of the handle $\Ha = \Ha^1$ along these balls.

Let $\bar a = (a_1,\dotsc, a_{n-1})$ and let 
\begin{equation*}
B^{2n-1}_{\rho,\bar a} = \{(u_1,v_1, \dotsc,, u_{n-1}, v_{n-1}, z) \in \R^{2n-1}; \sum_{i=1}^{n-1}a_i(u_i^2 + v_i^2) + z^2 \leq \rho \} 
\end{equation*}
equipped with the contact structure $\alpha_b = dz + \frac{1}{2}(u dv - v du)$. 
We will often omit the dimension and only write $B_{\rho,\bar a}$.
Identify this with a ball in $(\R^{2n-1}, dz - ydx)$, centered at $c = (x,y,z_0)$ via the contact embedding $F_c \colon B_{\rho,\bar a} \to \R^{2n-1}$,
\begin{equation*}
 F_c(u,v,z) = (x+u,y+v,z+z_0+yu + \frac{1}{2}uv).
\end{equation*} 

For suitable $\rho$ and $c$ these balls will be our attaching locus, as follows. 
Fix two points $c_{\pm} \in \R^{2n-1}$ of distance $d >> \rho$ and a $\delta << \rho$, let $A^\pm_\rho(\delta) \subset \Ha_-$ be defined by
\begin{align*}
 A^+_\rho(\delta) &= \{(x,y,p,q) \in \Ha_-; p = 0, q> 0, \sum_{i=1}^{n-1} a_i(x_i^2 + y_i^2)\leq \rho^2\}, \\
 A^-_\rho(\delta) &= \{(x,y,p,q) \in \Ha_-; p = 0, q< 0, \sum_{i=1}^{n-1} a_i(x_i^2 + y_i^2)\leq \rho^2\}. 
\end{align*}
Define the map $G \colon A^\pm_\rho(\delta) \to B_{\rho,\bar a}$,
\begin{equation*}
 G(x,y,0,q) = ( x_1, y_1,\dotsc,  x_{n-1}, y_{n-1},0).
\end{equation*}
Then $G^*\alpha_b = \alpha_{-\delta}|_{A^\pm_\rho(\delta)}$ and since the Reeb vector field is transverse to $A^\pm_\rho(\delta)$ and $G(A^\pm_\rho(\delta))$, respectively, its flow can be used to extend $G$ to a contactomorphism from a neighborhood of $A^\pm_\rho(\delta)$ to $B_{\rho,\bar a}$. 
Denote this neighborhood by $B^{\pm}_{\rho,\bar a}(\delta) \subset \Ha_-$, and note that this neighborhood is identified with a neighborhood of $c_\pm$ via the composition $F_{c_\pm} \circ G$.

The next step is to identify $B^{\pm}_{\rho,\bar a}(\delta) - B^{\pm}_{\frac{\rho}{2}, \bar a}(\delta) \subset \Ha_-$ with a region in $\Ha_+$. 
To do that we use the Liouville flow in the handle, given by 
\begin{align*}
 &\phi_Z^t((x(0),y(0),p(0),q(0)) = \\
 &(e^{\frac{1}{2}t}x_1(0),e^{\frac{1}{2}t}y_1(0), \dotsc , e^{\frac{1}{2}t}x_{n-1}(0),e^{\frac{1}{2}t}y_{n-1}(0),e^{{2}t}p(0),e^{-t}q(0)).
\end{align*}

Recall the standard Legendrian cylinder defined in Section \ref{sec:assumptions}. Perturb it to be given by 
\begin{align*}
 \Lambda_\st &= \{(x,y,p,q) \in \Ha_+; y = p = 0 \} = \{(x,y,p,q) \in \Ha_+; \sum_{i=1}^{n-1}a_ix_i^2 =2(\delta^2 + q^2) \} \\
 & = \left\{\left(\sqrt{\frac{2(\delta^2 + q^2)}{a_1}}\hat x_1,0,\dotsc,\sqrt{\frac{2(\delta^2 + q^2)}{a_{n-1}}}\hat x_{n-1},0, 0,q\right)\right\}
\end{align*}

where $\hat x_1,\dotsc,\hat x_{n-1}$ are coordinates for $S^{n-2}\subset \R^{n-1}$. 

Let $T(x,y,p,q)$ be the time of the Liouville flow from $\Ha_-$ to $(x,y,p,q)\in \Ha_+$.

\begin{lemma}\label{lma:T}
 If $a = \left(\sqrt{\frac{2(\delta^2 + q^2)}{a_1}}\hat x_1,0,\dotsc,\sqrt{\frac{2(\delta^2 + q^2)}{a_{n-1}}}\hat x_{n-1},0, 0,q\right) \in \Lambda_\st$ then $T(a) = T(q)$ and $T(q)$ decreases when $q$ increases ($e^{-T(q)}$ increases with $q$).
\end{lemma}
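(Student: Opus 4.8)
\textbf{Proof plan for Lemma \ref{lma:T}.}

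The statement has two parts, and the plan is to read off both of them from the explicit formula for the Liouville flow in $\Ha^1$ together with the defining equation of $\Lambda_\st$. First I would set up the equation determining $T(x,y,p,q)$: a point $(x(0),y(0),p(0),q(0)) \in \Ha_-$ flows to $(x,y,p,q) \in \Ha_+$ in time $t$ precisely when
\begin{equation*}
 \sum_{j=1}^{n-1}\tfrac{a_j}{2}\bigl(x_j(0)^2+y_j(0)^2\bigr)e^{t} + \sum 2p_j(0)^2 e^{4t} - q_j(0)^2 e^{-2t} = \delta^2,
\end{equation*}
where I have substituted the components of $\phi_Z^t$. For a point $a \in \Lambda_\st$ we have $y=p=0$, so along the whole flow line $y(0)=p(0)=0$ as well (the $y$- and $p$-components of $\phi_Z^t$ scale the initial $y(0),p(0)$), and hence the equation collapses to
\begin{equation*}
 \sum_{j=1}^{n-1}\tfrac{a_j}{2}x_j(0)^2\, e^{t} - q(0)^2 e^{-2t} = \delta^2 ,
\end{equation*}
together with the constraint that $(x(0),0,0,q(0)) \in \Ha_-$, i.e. $\sum \tfrac{a_j}{2}x_j(0)^2 - q(0)^2 = -\delta^2$. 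The endpoint condition $a \in \Lambda_\st$ reads $\sum a_j x_j^2 = 2(\delta^2+q^2)$ with $x_j = e^{t/2}x_j(0)$, $q = e^{-t}q(0)$.

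The first claim, $T(a)=T(q)$, then follows because these three relations involve $x(0)$ only through the single quantity $\Sigma_0 := \sum \tfrac{a_j}{2}x_j(0)^2$: eliminating $\Sigma_0$ between $\Sigma_0 e^{t} - q(0)^2e^{-2t} = \delta^2$ and $\Sigma_0 - q(0)^2 = -\delta^2$ gives a relation purely between $t$, $q(0)$ and $\delta$, and then using $q = e^{-t}q(0)$ to trade $q(0)$ for $q$ yields a relation purely between $t$ and $q$. Concretely one gets $\Sigma_0 = q(0)^2 - \delta^2$, so $(q(0)^2-\delta^2)e^{t} - q(0)^2 e^{-2t} = \delta^2$; multiplying by $e^{2t}$ and substituting $q(0)^2 = q^2 e^{2t}$ turns this into a single equation $F(t,q)=0$, so $t$ is determined by $q$ alone. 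This is the function $T(q)$.

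For the monotonicity I would differentiate the defining relation $F(T(q),q)=0$ implicitly, or — probably cleaner — solve it in the form $e^{-T(q)}$ as a function of $q$. Setting $w = e^{-T}$, the equation $(q(0)^2-\delta^2)e^{t} - q(0)^2 e^{-2t} = \delta^2$ with $q(0) = q/w$ becomes, after multiplying through by $w^{2}$ and simplifying, an algebraic relation between $w$ and $q$ of the form (something like) $q^2 w - (\delta^2+q^2)w^{3} = \delta^2 w^{?}\cdots$; I would reduce it to a clean monotone relation and check $dw/dq>0$ directly, which is equivalent to $T(q)$ being decreasing. The main obstacle — really the only place where care is needed — is bookkeeping the exponents correctly in the Liouville flow (the $p$-coordinate scales by $e^{2t}$, the $x,y$-coordinates by $e^{t/2}$, the $q$-coordinate by $e^{-t}$) and making sure the substitution $q(0)=e^{T}q$ is consistent; once the equation $F(t,q)=0$ is in hand, both the well-definedness of $T(q)$ and its monotonicity are elementary one-variable calculus. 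I would also note that existence and uniqueness of $T$ follow because the left-hand side of the flow equation is strictly monotone in $t$ along $\Lambda_\st$ (the $e^{t}$ term is increasing, the $-e^{-2t}$ term is increasing), so it crosses the level $\delta^2$ exactly once.
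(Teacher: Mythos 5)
Your proposal is correct and follows essentially the same route as the paper: both reduce the Liouville flow-time condition along $\Lambda_\st$ (where $y=p=0$) to the single relation $q^2(e^{3T}-1)=\delta^2(e^{T}+1)$, from which $T=T(q)$ is immediate and the monotonicity follows by elementary one-variable calculus. The only cosmetic difference is that the paper flows backward from $a\in\Ha_+$ while you flow forward from $\Ha_-$, and your added remark on existence and uniqueness of $T$ is a harmless bonus.
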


\begin{proof}
We should find points $x_i,q_0 \in \Ha_-$, $i=1,\dotsc, n-1$, so that 
\begin{equation}
x_i = e^{-\frac{1}{2}T(a)}\sqrt{\frac{2(\delta^2 + q^2)}{a_i}} \hat x_i, \quad i = 1, \dotsc, n-1, \qquad q_0 = e^{T(a)}q.
\end{equation}
and such that 
\begin{equation}\label{eq:sumt}
\sum_{j=1}^{n-1} \frac{a_j}{2} x_j^2 - q_0^2 = -\delta^2.
\end{equation}
That means that $T(a)$ should satisfy  
\begin{equation}
\sum_{j=1}^{n-1} \frac{a_j}{2} e^{-T(a)}\frac{2(\delta^2 + q^2)}{a_j} \hat x_j^2  = e^{2T(a)}q^2-\delta^2,
\end{equation}
which simplifies to
\begin{equation}\label{eq:sum2}
 e^{-T(a)}(\delta^2 + q^2) = e^{2T(a)}q^2-\delta^2.
 \end{equation} 
 From this we see that $T(a) = T(q)$. Write $u = e^{T(a)}$ to simplify notation. Then \eqref{eq:sum2} can be written as 
 \begin{equation}
 \delta^2 + q^2  = u^3q^2- u\delta^2.
 \end{equation}
 When $q^2 = \delta^2$ this equation has solution close to $u =3/2$. Moreover, assuming that $u > 1$ we might rewrite this as 
 \begin{equation}
 \frac{q^2}{\delta^2} = \frac{u+1}{u^3-1},
 \end{equation}
 and since the function $\frac{u+1}{u^3-1}$ is strictly increasing to $\infty$ as $u$ decreases from $2$ to $1$ , the second statement follows.
\end{proof}

This means that the image of $\Lambda_\st$ in $\Ha_-$ under the negative Liouville flow is given by 
\begin{equation*}
 \left(e^{-\frac{1}{2}T(q)}\sqrt{\frac{2(\delta^2 + q^2)}{a_1}}\hat x_1,0,\dotsc,e^{-\frac{1}{2}T(q)}\sqrt{\frac{2(\delta^2 + q^2)}{a_{n-1}}}\hat x_{n-1},0, 0,e^{T(q)}q\right).
\end{equation*}
To view this in $B_{\rho,\bar a}$ let 
\begin{equation*}
E^{2n-2}(\rho,\bar a) = \{(u_1,v_1, \dotsc,, u_{n-1}, v_{n-1}, z) \in \R^{2n-1}; \sum_{i=1}^{n-1}a_i(u_i^2 + v_i^2) + z^2 = \rho \}.  
\end{equation*}
Then 
\begin{equation*}
 \partial B_{\rho,\bar a}^{2n-1} = E^{2n-2}(\rho,\bar a) \supset E^{2n-3}(\rho,\bar a) = \{\sum_{i=1}^{n-1} a_i(u_i^2 + v_i^2) = \rho\},
\end{equation*}
which is a contact submanifold of $B_{\rho,\bar a}^{2n-1}$. Moreover, after scaling if necessarily, we get that 
\begin{equation*}
 \Lambda_\st \cap E^{2n-3}(\rho,\bar a) = \{\sum_{i=1}^{n-1} a_iu_i^2 = \rho, v= 0\}  = E^{n-2}(\rho,\bar a)
\end{equation*}
which is a Legendrian submanifold of $E^{2n-3}(\rho,\bar a) $, denote it $\Lambda_{\st, \sub}$. 
Let $U_\nu \subset E^{2n-3}(\rho,\bar a) $ be a $1$-jet neighborhood of $\Lambda_{\st,\sub}$.

Assume that 
\begin{equation*}
\Lambda \cap \partial B_{\rho,\bar a} = \Lambda \cap E^{2n-3}(\rho,\bar a) = \Lambda \cap U_\nu  
\end{equation*}
 which then is a Legendrian in $E^{2n-3}(\rho,\bar a)$, which we denote by $\Lambda_\sub$. Assume that it does not intersect any coordinate subspaces $\{(u_i,v_i) = 0\}$ for $i =1,\dotsc,n-1$.  Further assume that $\Lambda \cap B_{\rho,\bar a}$ is a cone on $\Lambda_\sub$, meaning that if 
 \begin{equation*}
  \Lambda_\sub = \{(\tilde u, \tilde v)\} \subset E^{2n-3}(\rho,\bar a) \subset \partial B_{\rho,\bar a},
 \end{equation*}
then there is some  positive parameter $r$ which is strictly
increasing with the radius of $B^{2n-1}_{\rho,\bar a}$ so that 
\begin{equation*}
 \Lambda \cap (B_{\rho,\bar a}^{2n-1} \setminus B_{\frac{1}{2}\rho, \bar a}^{2n-1}) = \{(r \tilde u, r \tilde v,0)\}.
\end{equation*}

Hence
\begin{equation*}
 G^{-1}(\Lambda_{\sub}) = \{(\tilde u, \tilde v, 0, 0) \} \subset \Ha_-
\end{equation*}
and similar to the proof of Lemma \ref{lma:T} we see that we might assume that $\Lambda$ is a cone on $\Lambda_{\sub}$ in the handle, that is
\begin{equation*}
 \Lambda \cap \Ha_+ = \{(\tilde r\tilde u, \tilde r\tilde v, 0, q) \}, 
\end{equation*}
where $\tilde r \colon D^1 \to \R_{\geq 0}$ is a Morse function with exactly one critical point, located at $q = 0$ and of index 0.  
.

Let $E_\delta(a_1, \dotsc, a_{m}) = \{\sum_{i=1}^{m} a_i |z_i|^2 =2\delta^2\} \subset \C^m$. 

\begin{lemma}\label{lma:subalg1}
 The Reeb chords of $\Lambda \cap \Ha_+  \subset \Ha_+$ can be identified with the Reeb chords of $G^{-1}(\Lambda_{\sub}) \subset E_\delta(a_1, \dotsc, a_{n-1})$, with degree shift given as in Lemma \ref{lma:gradingb}.
\end{lemma}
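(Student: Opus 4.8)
The statement is essentially a bookkeeping identification, so the plan is to carefully match Reeb chords on both sides and then track how the grading conventions of Section~\ref{sec:1jet} transport across the identifications built in this subsection. First I would recall that $\Lambda \cap \Ha_+^k$ has been arranged to be the cone $\{(\tilde r \tilde u, \tilde r \tilde v, 0, q)\}$ on $\Lambda_\sub$, with $\tilde r$ a Morse function having a single index-$0$ critical point at $q=0$. Using the explicit Reeb flow \eqref{eq:R1}--\eqref{eq:R4}, I would observe that since $p=0$ along $\Lambda\cap\Ha_+^k$, the hyperbolic $(p,q)$-directions contribute nothing to Reeb chords: a Reeb trajectory starting and ending on $\Lambda\cap\Ha_+^k$ must keep $p\equiv 0$ and $q$ fixed (any nonzero initial data in the $q$ or $p$ slot drives the point off $\Ha_+^k$ monotonically, so there is no return), hence every Reeb chord of $\Lambda\cap\Ha_+^k$ sits inside a single slice $\{q=\text{const}\}$. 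In that slice the relevant geometry is that of $E_\delta(a_1,\dots,a_{n-1}) = \{\sum a_i|z_i|^2 = 2\delta^2\}\subset\C^{n-1}$ with the contact form induced from $\alpha_{\delta}$, and the Reeb dynamics there is the rotation $(x_j,y_j)\mapsto$ rotation by $\tfrac{Na_j}{2}t$; because the $a_j$ are rationally independent, chords occur precisely where all angular coordinates simultaneously return, which happens only at the fixed radii where $\Lambda\cap\Ha_+^k$ meets such a closed trajectory, i.e.\ exactly along $G^{-1}(\Lambda_\sub)$.

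Next I would make the identification of chords precise. Since the Legendrian $\Lambda\cap\Ha_+^k$ is (after the identifications of Section~\ref{sec:assumptions}) the mapping cone on $\Lambda_\sub\subset E^{2n-3}(\rho,\bar a)$, and since $\tilde r$ has no critical point away from $q=0$, all Reeb chords of the cone are concentrated at $q=0$, where the cone restricts to $G^{-1}(\Lambda_\sub)\subset E_\delta(a_1,\dots,a_{n-1})$; conversely every Reeb chord of $G^{-1}(\Lambda_\sub)$ in $E_\delta$ extends to one of $\Lambda\cap\Ha_+^k$ by the product/slice structure. This gives the claimed bijection on generators. I would phrase this as: the inclusion of the $q=0$ slice induces a bijection between the Reeb chord set of $\Lambda\cap\Ha_+^k\subset\Ha_+^k$ and that of $G^{-1}(\Lambda_\sub)\subset E_\delta(a_1,\dots,a_{n-1})$.

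Finally I would compute the degree shift. On $G^{-1}(\Lambda_\sub)\subset E_\delta(a_1,\dots,a_{n-1})$ one uses the grading of Section~\ref{sec:1jet} (equivalently of $\Lambda_\sub\subset J^1(\R^{n-k-1})$, here $k=1$, so $J^1(\R^{n-2})$), while on $\Lambda\cap\Ha_+^k$ one uses the grading of Definition~\ref{def:grading} relative to $J^1(\Lambda_\st)$ together with the capping paths and connecting chords fixed in Section~\ref{sec:gradings}. The two differ by: (i) the contribution of the capping/connecting paths running from the base point $q_l$ in $D_a$ into the handle, and (ii) the cusp-edge counts $D(\cdot),U(\cdot)$ picked up along the cone direction and along $S^{k-1}$, which for $k=1$ is trivial. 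The point is that both of these depend only on which components $\Lambda_{i\pm}'$ of $\Lambda_\sub$ the endpoints of the chord lie in, not on the chord itself — this is exactly the content of Lemma~\ref{lma:gradingb}, whose function $K$ records precisely the path contributions and whose additive constant (here $|b_i[h]| = |b_i|_\sub + K(i-,i+)$) is the asserted shift. So I would simply invoke Lemma~\ref{lma:gradingb}: the chord of $\Lambda\cap\Ha_+^k$ corresponding to $b_i$ is $b_i[h]$, and its grading is $|b_i|_\sub + K(i-,i+)$.

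\textbf{Main obstacle.} The nontrivial point is not the chord bijection (which follows cleanly from the explicit Reeb flow plus rational independence of the $a_j$) but rather verifying that the grading conventions genuinely align — that is, that the Conley--Zehnder index computed intrinsically in $E_\delta(a_1,\dots,a_{n-1})$ matches the index computed via the front/capping-path recipe in $J^1(\Lambda_\st)$ up to the stated shift. This requires checking that the Maslov/rotation contribution of the extra hyperbolic $(p,q)$-directions (and of the cone parameter) is accounted for exactly by $K$, and that no additional $\pm 1$ discrepancy arises from the distinguished-puncture normalization in \eqref{eq:czgrading} versus Definition~\ref{def:grading}. I expect this to reduce, as in \cite{en}, to a linear-algebra computation with the symplectic trivialization along the capping paths, and to be routine once set up, but it is the step that carries the real content.
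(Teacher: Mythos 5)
Your proposal follows essentially the same route as the paper: the entire content of the paper's proof is that $p=0$ along $\Lambda\cap\Ha_+$ together with the hyperbolicity of the Reeb flow in the $(p,q)$-plane forces $p=q=0$ along any chord, so the dynamics restrict to $E_\delta(a_1,\dotsc,a_{n-1})$, with the degree shift simply deferred to Lemma \ref{lma:gradingb} exactly as you do. One small imprecision: the Reeb vector field is tangent to $\Ha_+^k$, so a nonzero $q(0)$ does not drive the point \emph{off} $\Ha_+^k$; rather, \eqref{eq:R3} shows $p_j(T)=\tfrac{1}{\sqrt2}q_j(0)\sinh(N\sqrt2\,T)$ when $p_j(0)=0$, so $p(T)=0$ with $T>0$ forces $q(0)=0$ and hence $q\equiv 0$ along the chord.
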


\begin{proof}
 Since $p = 0$ along $\Lambda \cap \Ha_+ $ we must have $p=q=0$ along Reeb chords of $\Lambda \cap \Ha_+ $, since the Reeb flow is hyperbolic in the $(p,q)$-plane. 
 Thus the Reeb dynamics restrict to the one on $E_\delta(a_1,\dotsc,a_{n-1})$. 
\end{proof}

\begin{lemma}\label{lma:inhandle}
 If we have a $J$-holomorphic disk of $\Lambda$ with a positive puncture in $\Ha$ then it cannot have a negative puncture outside $\Ha$. 
\end{lemma}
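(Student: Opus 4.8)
The statement is an action (energy) argument combined with the positivity of the symplectic area of pseudo-holomorphic disks. The plan is to show that a $J$-holomorphic disk with a positive puncture at a handle chord $b_i[h]$ (living at $\Lambda_\sub \times \{0,0\} \subset \Ha^k_+$, with $p=q=0$) has all of its boundary confined to the handle $\Ha$, so that no negative puncture can escape. First I would recall that the positive puncture asymptotic is a chord with $p=q=0$, and more generally, by Lemma \ref{lma:subalg1} (and its higher-index analogue), every Reeb chord of $\Lambda \cap \Ha_+^k$ lies in the locus $\{p=q=0\}$ because the Reeb flow is hyperbolic in the $(p,q)$-directions. Thus both the positive puncture and any negative puncture \emph{inside} $\Ha$ have action computable from the contact form $\alpha_{+\delta}$ restricted to the ellipsoid slice $E_\delta(a_1,\dots,a_{n-k})$, and these actions are bounded by a quantity controlled by the dipping/handle parameters, which we may take as small as we like.

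Next I would set up the monotonicity/confinement step. The key geometric input is that the almost complex structure was chosen (Section on "Almost complex structure") to be cylindrical and to agree with the standard product structure in a $1$-jet neighborhood of $\Lambda_\st$, and that $\Lambda \cap \Ha^k$ lies in that neighborhood. In the $(p,q)$-directions transverse to $J^1(\Lambda_\st)$ the symplectic form is the standard $dp\wedge dq$ and $\Lambda$ sits in $\{p=0\}$; a disk with boundary on $\R\times\Lambda$ and punctures all at chords with $p=q=0$ has zero "$(p,q)$-area" contribution unless it genuinely leaves the handle. More precisely, I would consider the projection of the disk to the $(p,q)$-plane (or to the collar coordinate $\sum \frac{a_j}{2}(x_j^2+y_j^2)+\sum(2p_j^2-q_j^2)$ measuring depth in the handle) and apply the maximum principle / an exact-Lagrangian action estimate: the symplectic area of the disk equals the total action of the positive puncture minus the actions of the negative punctures, all of which are small, so by the standard monotonicity inequality for $J$-holomorphic curves the disk cannot have image of large diameter, hence cannot reach $\partial \Ha$ and exit into $D_a$ or $N(\Upsilon)$.

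Then I would convert "cannot exit" into "cannot have a negative puncture outside $\Ha$": a negative puncture outside $\Ha$ is asymptotic to a diagram chord $a_j$ in $D_a$ (or to a dipping chord outside the handle), which by the assumption $\Upsilon \subset D_A\setminus D_a$ and the location of the attaching region is separated from the handle chords by the region $S^{k-1}\times\Lambda_\sub\times\{p_2\}$ created by the dipping; by continuity the disk's boundary would have to cross this separating hypersurface, contradicting the confinement just established (and one also checks, as in the "Diagram disks" discussion of the proof of Proposition \ref{prp:prp1}, that crossing it costs a definite amount of action, again contradicting the smallness of the total action). Passing to Morse flow trees via \cite{trees} in the $1$-jet neighborhood of $\Lambda_\st$ makes the separation argument completely explicit: a rigid tree with positive puncture at $b_i[h]$ flows according to gradients of differences of the defining functions of $\Lambda \cap \Ha^k$, which all have $p=q=0$, so the tree stays in $\{p=q=0\}\cap\Ha_+^k$.

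\textbf{Main obstacle.} The delicate point is the uniformity of the action bound: one must check that the actions of \emph{all} handle chords $b_i[h]$, together with the dipping parameters $\delta,\rho_i,p_i,\delta_i$, can be simultaneously arranged so that the total available symplectic area is strictly smaller than the monotonicity threshold needed for a disk to travel from the handle chords out through the attaching region. This is where the hypothesis "if the parameters for the dipping function $f$ are chosen sufficiently small" does the work, and the argument parallels [\cite{en}, Section 5.2] — but in higher dimensions one must be slightly careful that the almost complex structure interpolation between the standard structure near $\Lambda_\st$ and the cylindrical structure in the rest of the handle does not spoil the maximum principle; I would handle this by noting the interpolation occurs outside a neighborhood the confined disk never reaches, so it is irrelevant.
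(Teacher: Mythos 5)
The paper's own proof is a one-line citation to [\cite{en}, Lemma 5.6], and the single substantive remark it adds identifies the essential mechanism: ``the $m$:th iterate of a Reeb orbit in the handle has grading of order $m$ and action of order $\delta m$, which is what is needed for the proof to work out.'' Your proposal never engages with closed Reeb orbits at all. This is a genuine gap, not a stylistic difference: the differential here counts \emph{anchored} disks, i.e.\ two-level buildings whose top level may have interior punctures asymptotic to Reeb orbits, and the localization argument of [\cite{en}] proceeds by stretching the neck along a hypersurface around the handle, which produces limit curves with Reeb-orbit asymptotics in the ellipsoid $E_\delta(a_1,\dotsc,a_{n-k})$. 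An argument that only books the actions of boundary punctures (chords) cannot exclude these configurations; one needs precisely the dichotomy that the grading of the $m$-fold cover grows like $m$ while its action is only $O(\delta m)$, so that such pieces are either index-forbidden or action-forbidden. This is the content your plan is missing.

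Two further steps in your proposal do not hold up as stated. First, the reduction to Morse flow trees is circular: the disk/tree correspondence of \cite{trees} is available only inside the $1$--jet neighborhood of $\Lambda_\st$, whereas the whole point of the lemma is to show that the disk (whose interior is a priori free to wander over all of $\Ha_+^k\simeq S^{2n-k-1}\times\R^k$, which is much larger than that neighborhood) never leaves it; you cannot invoke trees to prove the confinement that licenses invoking trees. Second, the ``maximum principle in the $(p,q)$-plane'' is not justified, because the almost complex structure is cylindrical for the contact structure on $\Ha_+^k$ and is not a product with respect to the $(x,y)\times(p,q)$ splitting, so the projection of the disk to the $(p,q)$-plane is not holomorphic. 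Your action-positivity observation (that for $\delta$ small the handle chords have action $O(\delta^2)$, below the action of any diagram chord, so a negative puncture at a diagram chord would force negative $d\lambda$-energy) is a correct and useful ingredient, and the monotonicity estimate can plausibly be made quantitative; but without the neck-stretching and the Reeb-orbit grading/action estimate the proof is incomplete on exactly the point the paper flags as decisive.
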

\begin{proof}
 This follows similarly as in the proof of [\cite{en}, Lemma 5.6]. Note that the $m$:th iterate of a Reeb orbit in the handle has grading of order $m$ and action of order $\delta m$, which is what is needed for the proof to work out.  
\end{proof}

\begin{lemma}\label{lma:index1sub} 
Assume that $n>2$. Then the DGA $\Al(\Ha_+,\Lambda\cap \Ha_+)$ is  a sub-DGA of $\Al(D_A^H,\Lambda)$, and this sub-DGA is quasi-isomorphic to $\Al(J^1(\R^{n-2}),\Lambda_{\sub})$ with a degree shift given as in Lemma \ref{lma:gradingb}.
\end{lemma}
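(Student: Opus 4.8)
\textbf{Proof proposal for Lemma \ref{lma:index1sub}.}

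The plan is to assemble the statement from the structural results already in place and from the standard subdivision-of-a-Legendrian-DGA machinery. First I would argue that $\Al(\Ha_+, \Lambda\cap\Ha_+)$ is a sub-DGA of $\Al(D_A^H,\Lambda)$. For this, one needs two things: that the generators of $\Al(\Ha_+,\Lambda\cap\Ha_+)$ form a subset of the generators of $\Al(D_A^H,\Lambda)$, and that the differential of $\Al(D_A^H,\Lambda)$ restricted to this subset lands back in it. The first point is immediate once we know the Reeb chords of $\Lambda$ contained in $\Ha_+$ are exactly the handle chords $b_i[h]$ of the description in Section \ref{sec:differential}; this is Lemma \ref{lma:subalg1} together with the identification of Reeb chords there. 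The second point is precisely the content of Lemma \ref{lma:inhandle}: any $J$-holomorphic disk (equivalently, by the almost complex structure chosen in Section \ref{sec:differential} and the flow-tree correspondence, any rigid flow tree) with positive puncture at a chord inside $\Ha$ cannot have a negative puncture outside $\Ha$, so $\partial$ on these generators is computed entirely inside $\R\times\Ha_+$. Hence $\Al_H = \Al(\Ha_+,\Lambda\cap\Ha_+)$ is a genuine sub-DGA, and its differential counts only disks staying in the handle.

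Next I would identify this sub-DGA with a 1--jet space DGA. By Lemma \ref{lma:subalg1} the Reeb chords of $\Lambda\cap\Ha_+$ are in bijection with the Reeb chords of $G^{-1}(\Lambda_\sub)$ viewed as a Legendrian in the contact manifold $E_\delta(a_1,\dotsc,a_{n-1}) \subset \C^{n-1}$, which is a contact sphere $S^{2n-3}$ with the (distorted but contactomorphic to standard) contact structure coming from the ellipsoid. The hypotheses in Section \ref{sec:assumptions}/this section force $\Lambda_\sub$ to avoid all coordinate subspaces $\{(u_i,v_i)=0\}$, so it lies in a Darboux-type ball of $S^{2n-3}$ that can be identified with a ball in $J^1(\R^{n-2})$. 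Applying Lemma \ref{lma:quasi} in this lower-dimensional setting (this is exactly where the hypothesis $n>2$ enters: we need $\dim E_\delta = 2n-3 \geq 3$, i.e.\ $n\geq 3$, for Lemma \ref{lma:quasi} and the chord-genericity to make sense, and for $\Lambda_\sub$ to be a Legendrian sphere of positive dimension) gives a quasi-isomorphism $\Al(E_\delta,\Lambda_\sub) \simeq_{\mathrm{quasi}} \Al(J^1(\R^{n-2}),\Lambda_\sub)$. The degree shift is the one computed in Lemma \ref{lma:gradingb}: the grading of $b_i[h]$ as a chord of $\Lambda$ differs from $|b_i|_\sub$ by the constant $K(i-,i+)$ associated to the choice of connecting chords and capping paths, so passing from $\Al_H$ to $\Al(J^1(\R^{n-2}),\Lambda_\sub)$ shifts degrees exactly by $-K(i-,i+)$ on the generator $b_i[h]$.

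The remaining point — and the one I expect to be the main obstacle — is to check that the \emph{differential} of $\Al_H$, computed by disks in $\R\times\Ha_+$ with the cylindrical almost complex structure chosen in Section \ref{sec:differential}, actually agrees with the differential of $\Al(J^1(\R^{n-2}),\Lambda_\sub)$ (up to the quasi-isomorphism). Because $\Lambda\cap\Ha_+$ is a cone $\{(\tilde r\tilde u,\tilde r\tilde v,0,q)\}$ on $\Lambda_\sub$ and the Reeb flow is hyperbolic in the $(p,q)$-plane, every Reeb chord of $\Lambda\cap\Ha_+$ has $p=q=0$, so all chords live over the ``core slice'' $q=0$. One then wants to say that a rigid disk with all punctures at such chords is forced, again by an action/monotonicity argument in the $q$-direction together with the product structure of the chosen almost complex structure near $\Lambda_\st \simeq S^{n-2}\times D^1$, to be ($\R$-invariantly) contained in the slice $\{q=0\}\times$(1--jet neighborhood), hence to be a disk of $\Lambda_\sub$ in $J^1(\R^{n-2})$ in the sense of Section \ref{sec:1jet}; conversely every such disk of $\Lambda_\sub$ extends trivially in the $q$-direction. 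Making this precise is most cleanly done via Morse flow trees: with the flow-tree model of Section \ref{sec:1jet}, a tree attached to the cone must have all its vertices projecting into the core $S^{n-2}$ because the local function differences defining $\Lambda\cap\Ha_+$ factor through the $S^{n-2}$-coordinate (the $q$-dependence is an overall scaling $\tilde r(q)$ which contributes no new critical points and no transverse crossings of the singular set in the $q$-direction), giving a bijection of rigid trees that preserves signs mod $2$. I would cite the flow-tree invariance/transversality results of \cite{trees} and the correspondence of \cite{Georgios,teckcob} for the disk-vs-tree and symplectization-vs-cotangent comparisons, mirroring the argument of [\cite{en}, Lemma 5.6] referenced in the proof of Lemma \ref{lma:inhandle}, and conclude that the induced map on DGAs composed with the quasi-isomorphism of Lemma \ref{lma:quasi} is a quasi-isomorphism $\Al(\Ha_+,\Lambda\cap\Ha_+) \simeq_{\mathrm{quasi}} \Al(J^1(\R^{n-2}),\Lambda_\sub)$ with the stated degree shift.
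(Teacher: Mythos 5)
Your proposal is correct and follows essentially the same route as the paper: the sub-DGA claim via Lemma \ref{lma:inhandle}, the reduction to $\Al(E_\delta(a_1,\dotsc,a_{n-1}), G^{-1}(\Lambda_\sub))$ using the conic structure of $\Lambda$ in the handle with its minimum at $q=0$, and the final quasi-isomorphism to $\Al(J^1(\R^{n-2}),\Lambda_\sub)$ via Lemma \ref{lma:quasi} (which is indeed where $n>2$ is needed). Your flow-tree elaboration of why the cone structure localizes the differential to the slice $q=0$ is a more detailed version of what the paper states in one clause.
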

\begin{proof}
 The first statement follows from Lemma \ref{lma:inhandle}, and since $\Lambda$ is conic in $\Ha^+$ with a minimum at $q=0$ it follows that $\Al(\Ha_+,\Lambda\cap \Ha_+)$ is quasi-isomorphic to \linebreak 
 $\Al(E_\delta(a_1, \dotsc, a_{n-1}) ,G^{-1}(\Lambda_{\sub}))$ with a degree shift given as in Lemma \ref{lma:gradingb}, which in turn is quasi-isomorphic to  $\Al(J^1(\R^{n-2}),\Lambda_{\sub})$  by Lemma \ref{lma:quasi}. 

\end{proof}

\subsection{Index $k$ handles, $k \in \{2, \dotsc, n-2\}$}

This is similar to the case of index 1 handles, so we just describe the main differences.

To describe the attaching map, recall the contact identification 
\begin{equation*}
\psi \colon N(\Upsilon) \to  T^*_{\rho_1}(S^{k-1})\times D_{\rho_2}^{2n-2k} \times I_{\rho_3}
\end{equation*}
from Section \ref{sec:assumptions}. 

Let $\rho = (\tilde \rho_1, \tilde \rho_2)$ and let
\begin{equation}
A_\rho(\delta) = \{(x,y,p,q) \in \Ha_-; p \cdot q = 0, \sum_{i=1}^{n-k}a_i(x_i^2 + y_i^2) < \tilde \rho_2, |p|^2 < \tilde \rho_1\}.
\end{equation}
Choose  $\tilde \rho_1, \tilde \rho_2$ so small so that $G \colon A_\rho(\delta) \to  T^*_{\rho_1}(S^{k-1})\times D_{\rho_2}^{2n-2k} \times I_{\rho_3}$, 
\begin{align*}
G(x,y,p,q)  =& (q_1,\dotsc, q_{k}, (p\cdot q)q_1-p_1,\dotsc, (p\cdot q)q_k-p_k, x_1,\dotsc, x_{n-k},\\
&y_1,\dotsc, y_{n-k}, p\cdot q+\sum_{i=1}^{n-k} \frac{x_iy_i}{2})
\end{align*}
is defined.
Then $G^*\alpha_N = \alpha_{-\delta}|_{A_\rho(\delta)}$ and since the Reeb vector fields are transverse to $A_\rho(\delta)$ and $G(A_\rho(\delta))$, respectively, we can extend $G$ to a contactomorphism from a neighborhood of $A_\rho(\delta)$ in $\Ha_-$ to a neighborhood of $G(A_\rho(\delta))$.

As in the case of index 1 handles we might assume that $\Lambda$ is of the form
\begin{equation}
\Lambda \cap \Ha_+ = (\tilde r(q) G^{-1}(\Lambda_\sub),0,q) \simeq \Lambda_\sub \times D_k \subset \Ha_+ 
\end{equation}
where ${\Lambda}_\sub \subset E^{2n-2k-1}(\tilde \rho_2,\bar a)$ is a Legendrian submanifold and $\tilde r \colon D^k \to \R_{\geq 0}$ is a Morse function with exactly one critical point, located at $q = 0$ and of index 0. Moreover, we assume that  $\Lambda \cap \Ha_+$ is contained in a small 1--jet neighborhood of the perturbed standard Legendrian cylinder in $\Ha$
\begin{align*}
 \Lambda_\st &= \{(x,y,p,q) \in \Ha_+^k; y = p = 0 \} = \{(x,y,p,q) \in \Ha_+^k;\sum_{i=1}^{n-k}a_ix_i^2 = \delta^2 + \sum_{i=1}^k q_i^2\} \\
 &\simeq S^{n-k-1} \times D^k. 
\end{align*}

\begin{lemma}\label{lma:subalgk}
The Reeb chords of $\Lambda \cap \Ha_+$ are contained in the slice $p=q=0$, and $\Al(\Ha_+,\Lambda \cap \Ha_+)$ is a sub-DGA of $\Al(V,\Lambda)$ isomorphic to $\Al(J^1(\R^{n-k-1}),\Lambda_\sub)$ with a degree shift as in Lemma \ref{lma:gradingb}.
\end{lemma}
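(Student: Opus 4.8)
The plan is to mimic the structure of the index $1$ case (Lemmas \ref{lma:subalg1}, \ref{lma:inhandle}, \ref{lma:index1sub}), adapting each step to $k\in\{2,\dots,n-2\}$. First I would establish that the Reeb chords of $\Lambda\cap\Ha_+$ all lie in the slice $\{p=q=0\}$. From \eqref{eq:R3} and \eqref{eq:R4} the Reeb flow is hyperbolic in the $(p,q)$-variables: if $(p(0),q(0))\neq(0,0)$ then $|p(t)|^2+|q(t)|^2$ is strictly monotone, so no chord can close up in these coordinates; since $p=0$ along $\Lambda\cap\Ha_+$ by the assumed conic form $(\tilde r(q)G^{-1}(\Lambda_\sub),0,q)$, a chord forces $p=q=0$ at both endpoints, hence identically along the chord. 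On the slice $\{p=q=0\}\subset\Ha_+$ the Reeb dynamics reduce to the rotational flow \eqref{eq:R1}--\eqref{eq:R2} on the ellipsoid $E^{2n-2k-1}(\tilde\rho_2,\bar a)=\{\sum a_i(x_i^2+y_i^2)=\delta^2\}$, so the Reeb chords of $\Lambda\cap\Ha_+$ are canonically identified with those of $G^{-1}(\Lambda_\sub)\subset E^{2n-2k-1}(\tilde\rho_2,\bar a)$, exactly as in Lemma \ref{lma:subalg1}.

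Next I would show $\Al(\Ha_+,\Lambda\cap\Ha_+)$ is a sub-DGA of $\Al(V,\Lambda)$. This is the analogue of Lemma \ref{lma:inhandle}: a $J$-holomorphic disk of $\Lambda$ with a positive puncture at a chord contained in $\Ha$ cannot escape the handle, because the only way out is through a region where the local primitive (action) would have to increase, contradicting positivity of energy; the argument of [\cite{en}, Lemma 5.6] applies verbatim once one notes, as recorded there, that the $m$-th iterate of a Reeb orbit in the handle has grading and action growing linearly in $m$, so interior punctures at multiply-covered orbits do not interfere. Consequently disks with positive puncture at a handle chord have all negative punctures at handle chords, so the handle chords span a sub-DGA closed under $\partial$.

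Finally I would identify this sub-DGA with $\Al(J^1(\R^{n-k-1}),\Lambda_\sub)$. Since $\Lambda\cap\Ha_+$ is conic over $\Lambda_\sub$ with $\tilde r$ having a single index-$0$ critical point at $q=0$, an action/stretching argument (the $D^k$-direction carries no extra chords and the disks concentrate at the minimum $q=0$, as in the proof of Lemma \ref{lma:index1sub}) gives a quasi-isomorphism between $\Al(\Ha_+,\Lambda\cap\Ha_+)$ and $\Al(E^{2n-2k-1}(\tilde\rho_2,\bar a),G^{-1}(\Lambda_\sub))$; here one uses that $G^*\alpha_N=\alpha_{-\delta}$ so the contact geometry matches. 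Then Lemma \ref{lma:quasi} (applicable since $k<n-1$, so $\Lambda_\sub$ sits in a sphere of dimension $\geq 3$ and avoids the coordinate subspaces by assumption) identifies the latter with $\Al(J^1(\R^{n-k-1}),\Lambda_\sub)$. The degree shift is supplied by Lemma \ref{lma:gradingb}, which was already set up to compare $|b_i[h]|$ with $|b_i|_\sub$. The main obstacle I anticipate is the second step — verifying that no anchored or symplectization disk with a positive handle-chord puncture can have a negative puncture outside $\Ha$, i.e.\ the confinement statement — since this is where the hyperbolicity of the Reeb flow, the action filtration, and the behavior of multiply-covered Reeb orbits must all be controlled simultaneously; everything else is a fairly direct transcription of the index $1$ arguments with $S^{k-1}$ in place of $S^0$.
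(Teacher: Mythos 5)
Your proposal is correct and follows essentially the same route as the paper, whose proof of this lemma consists of the single remark that it is similar to the index $1$ case; your three steps (hyperbolicity of the Reeb flow in $(p,q)$ forcing chords into the slice $p=q=0$, confinement of disks via the argument of Lemma \ref{lma:inhandle}, and reduction to $\Al(J^1(\R^{n-k-1}),\Lambda_\sub)$ via the conic structure and Lemma \ref{lma:quasi}) are exactly the transcription of Lemmas \ref{lma:subalg1}--\ref{lma:index1sub} that the paper intends.
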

\begin{proof}
This is similar to the case of index 1 handles.
\end{proof}

\section{The singular homology of the free loop space of $\Cp^2$}\label{sec:examples}
In this section we give a Weinstein handle decomposition of $T^*\Cp^2$ and compute the Chekanov-Eliashberg DGA of the Legendrian attaching sphere. By the results of \cite{cotangentloop,cotangentloopviterbo, cotangentloopweber} and \cite{surg1} this gives a description of the singular homology of the free loop space of $\Cp^2$.

\subsection{Weinstein handle decomposition of $T^*\Cp^2$}
Recall that $\Cp^2$ is obtained from $B^4$ by attaching a 2-handle along a knot $\tilde \Upsilon \subset S^3$ with framing 1, and then attaching a 4-handle along the boundary of $B^4 \cup_{\tilde \Upsilon} \Ha^2$. 
Thus, to give a Weinstein handle decomposition of $T^*\Cp^2$ we should attach one subcritical handle to $\partial B^8$ along an isotropic $S^1$ with the correct framing and then attach the critical handle along a Legendrian  $S^3$ which goes through the subcritical handle along the standard Legendrian cylinder.

Another way of seeing this is to consider the Legendrian 
\begin{equation*}
 \tilde \Lambda = \{x_1^2 + x_2^2 + x_3^2 + x_4^2 = 1\} \subset S^7 = \{z = x + iy \in \C^4; |z| = 1\}
\end{equation*}
with the isotropic attaching sphere $\Upsilon$ of the subcritical handle as being a subset of $\tilde \Lambda$, and then let $\Lambda$ be the Legendrian submanifold we get by replacing a neighborhood of $\Upsilon$ with the standard Legendrian cylinder in $\Ha^2$.

If we view $\tilde \Lambda$ as a Legendrian submanifold of $J^1(\R^3) \simeq \partial B^8 \setminus \{\text{pt}\}$, then it can be given by the 1--jet lift of the locally defined functions $f_\pm \colon \R^3 \to \R$, $f_\pm(u_1, u_2, u_3)  = \tilde f_\pm(|u|)$ with $\tilde f_\pm$ as in Figure \ref{fig:fpm}. (Recall that the 1--jet lift of $f\colon \R^n \to \R$ is given by $\{(u,df(u),f(u)); u \in \R^n\} \subset J^1(\R^n)$.)


  \begin{figure}[ht]
 \includegraphics[height=10cm, width=13cm]{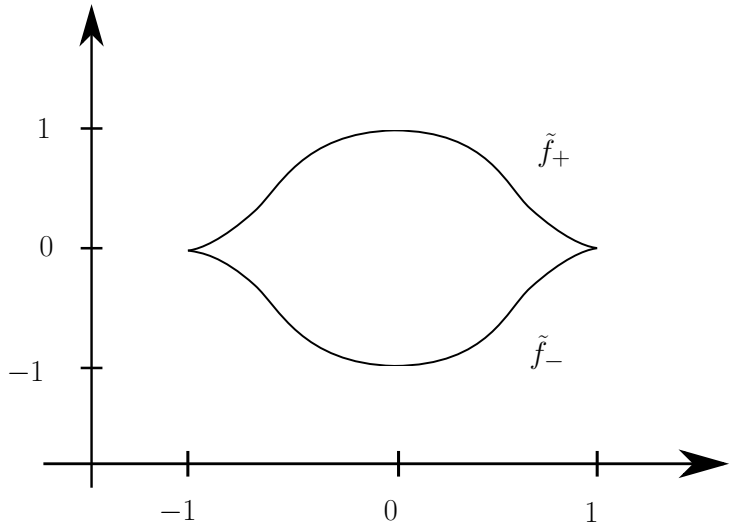}
 \vspace{-1cm}
\caption{The locally defined functions $\tilde f_\pm$.}
 \label{fig:fpm}
 \end{figure}

We see that $\tilde \Lambda$ has exactly one Reeb chord $a$ of grading 3, and this Reeb chord projects to the origin under the base projection $\Pi: J^1(\R^3) \to \R^3$. We also see that there is an $S^2$--family of Morse flow trees going from $\Pi(a)$ to the base projection of the cusp edge, which is nothing but the unit sphere $S^2 \subset \R^3$. In addition, there are no more flow trees of $\tilde \Lambda$. 

Identifying $\tilde \Lambda \setminus \{(0,0,1)\}$ with $\R^3$ via stereographic projection (mapping $(0,0,-1)$ to the origin) we get that the lifts of the flow trees of $\tilde \Lambda$ go radially from $\infty$ to the origin. Assume that the unit sphere $S^2 \subset \R^3$ represents the cusp edge singularity of $\tilde \Lambda$. 

Now we describe our choice of attaching sphere $\Upsilon$ for the subcritical handle in this picture. 
To that end, pick a point $p \in S^2 \subset \R^3 \subset \tilde \Lambda$. 
Then a neighborhood of $p$ in $\R^3$ can be identified with $J^1(I) \simeq T^*I \times \R$ where $I\subset S^2$ is some interval and $T^*I \simeq D$ is a disk contained in $S^2$, and the $\R$--direction corresponds to the radial direction in $\R^3$. 
Let $\Upsilon$ be given by the standard Legendrian unknot as in Figure \ref{fig:unknot}.

  \begin{figure}[ht]
 \includegraphics[height=6cm, width=7.5cm]{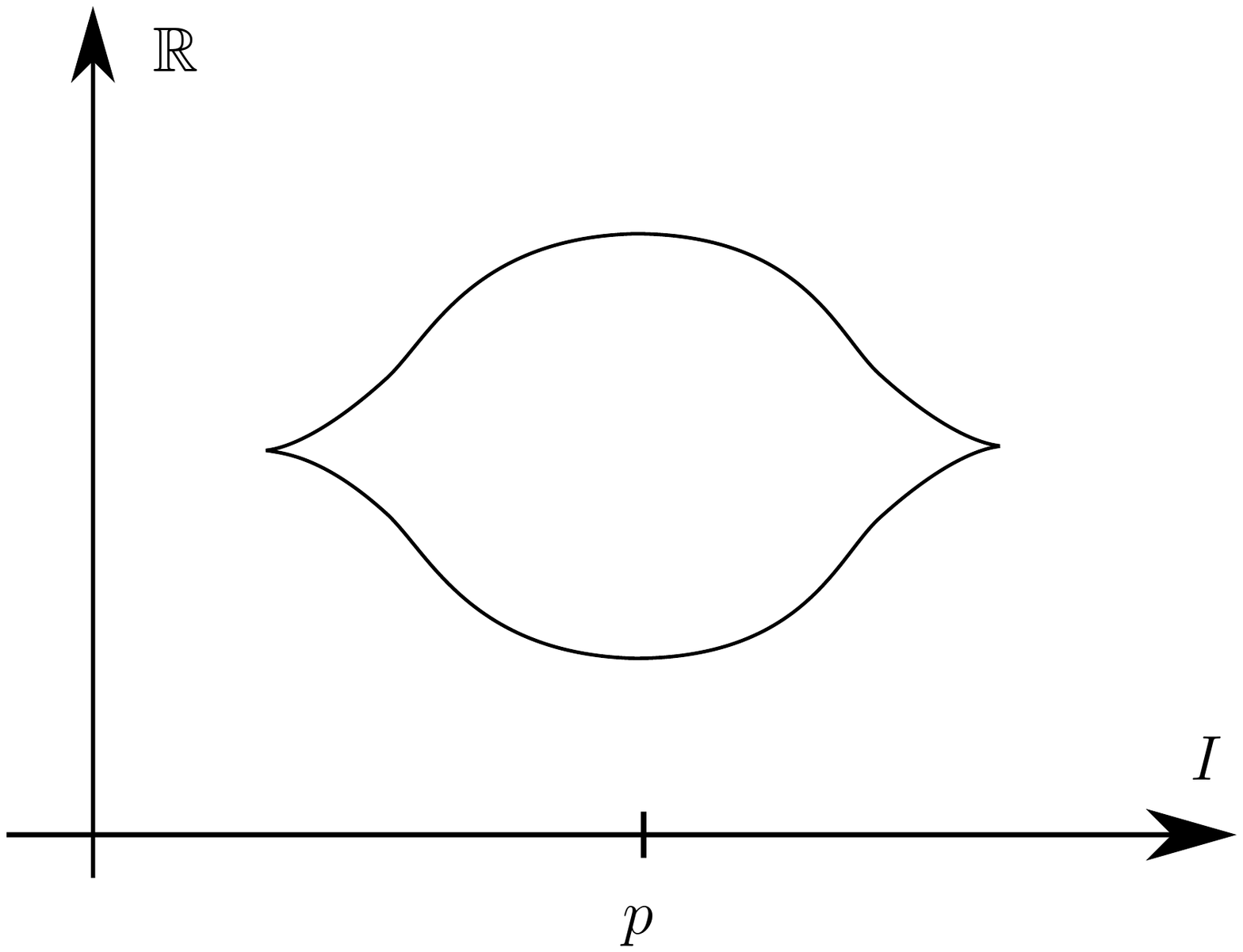}
\includegraphics[height=6cm, width=7.5cm]{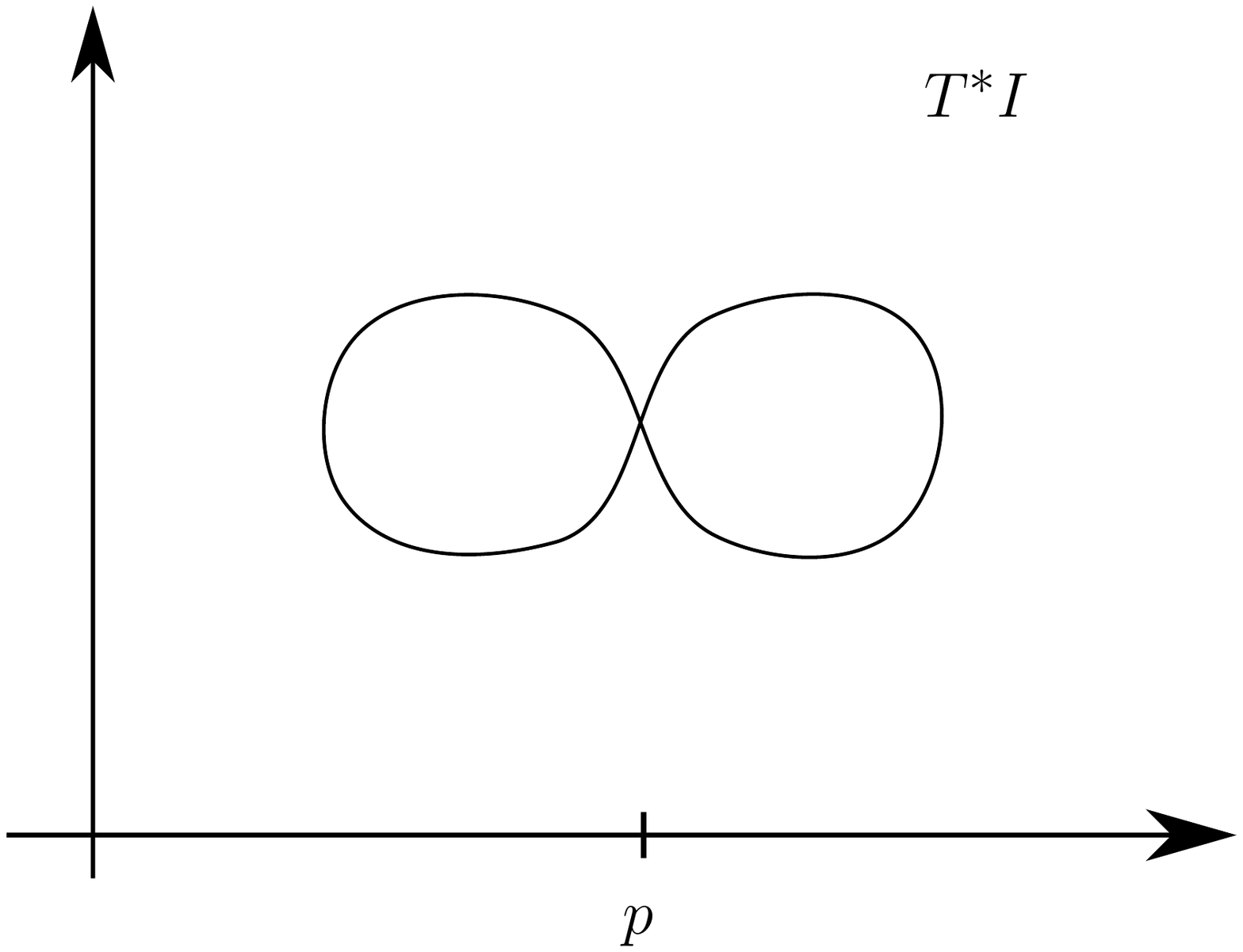}
 \vspace{-2cm}
\caption{The front and Lagrangian projection of the standard Legendrian unknot in $J^1(I)$.}
 \label{fig:unknot}
 \end{figure}

It follows that the lifted flow trees of $\tilde \Lambda$ intersect $\Upsilon$ either 0,1 or 2 times, in 2--dimensional, 1--dimensional and 0--dimensional families, respectively. 
Moreover, there is exactly one tree that intersects $\Upsilon$ in 2 points, namely the tree which goes through the point $p$. This tree gives rise to a rigid flow tree $\Gamma$ of $\Lambda$, and this will be the only rigid flow tree. Since $\Lambda$ coincides with $\Lambda_\st$ in the subcritical handle we have that $\Lambda_\sub$ is given by the standard Legendrian unknot in Figure \ref{fig:unknot}, which has exactly one Reeb chord $b$, and $|b|_{\sub} = |b| = 1$. The tree $\Gamma$ has positive puncture at $a$, one $Y_0$-vertex over the point $p$ and 2 negative punctures at $b[h]$. See Figure \ref{fig:tree}.


  \begin{figure}[ht]
 \includegraphics[height=6cm, width=7.5cm]{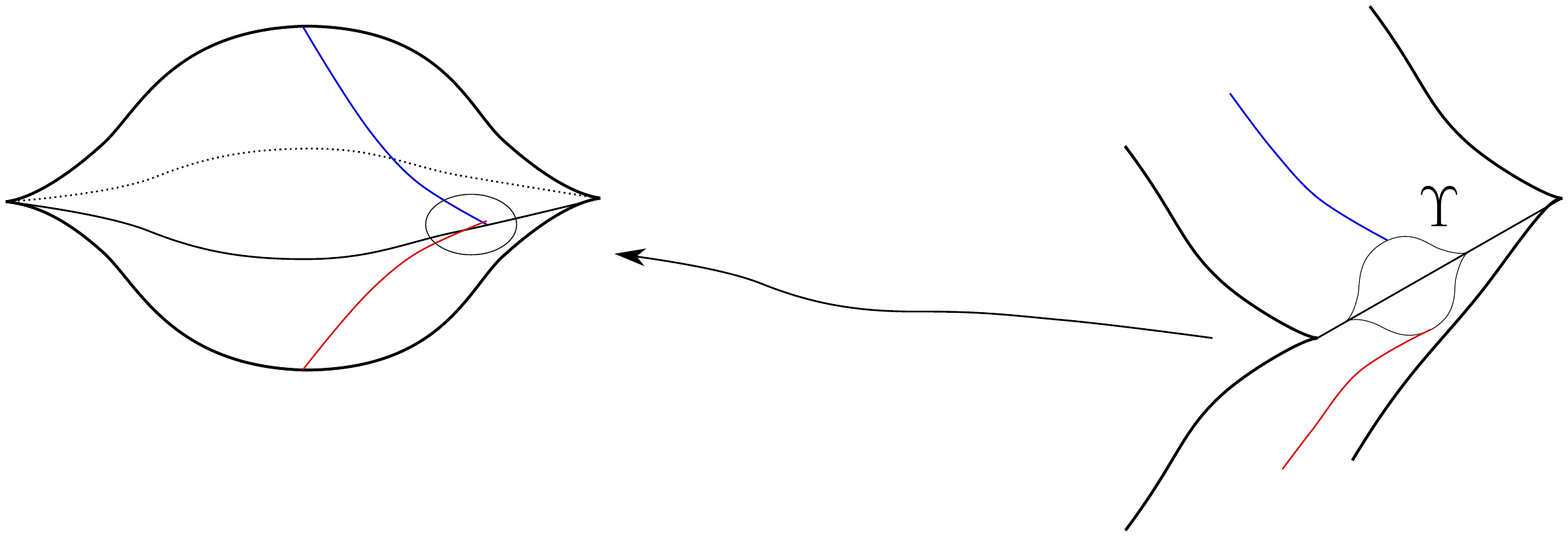}
 \includegraphics[height=6cm, width=7.5cm]{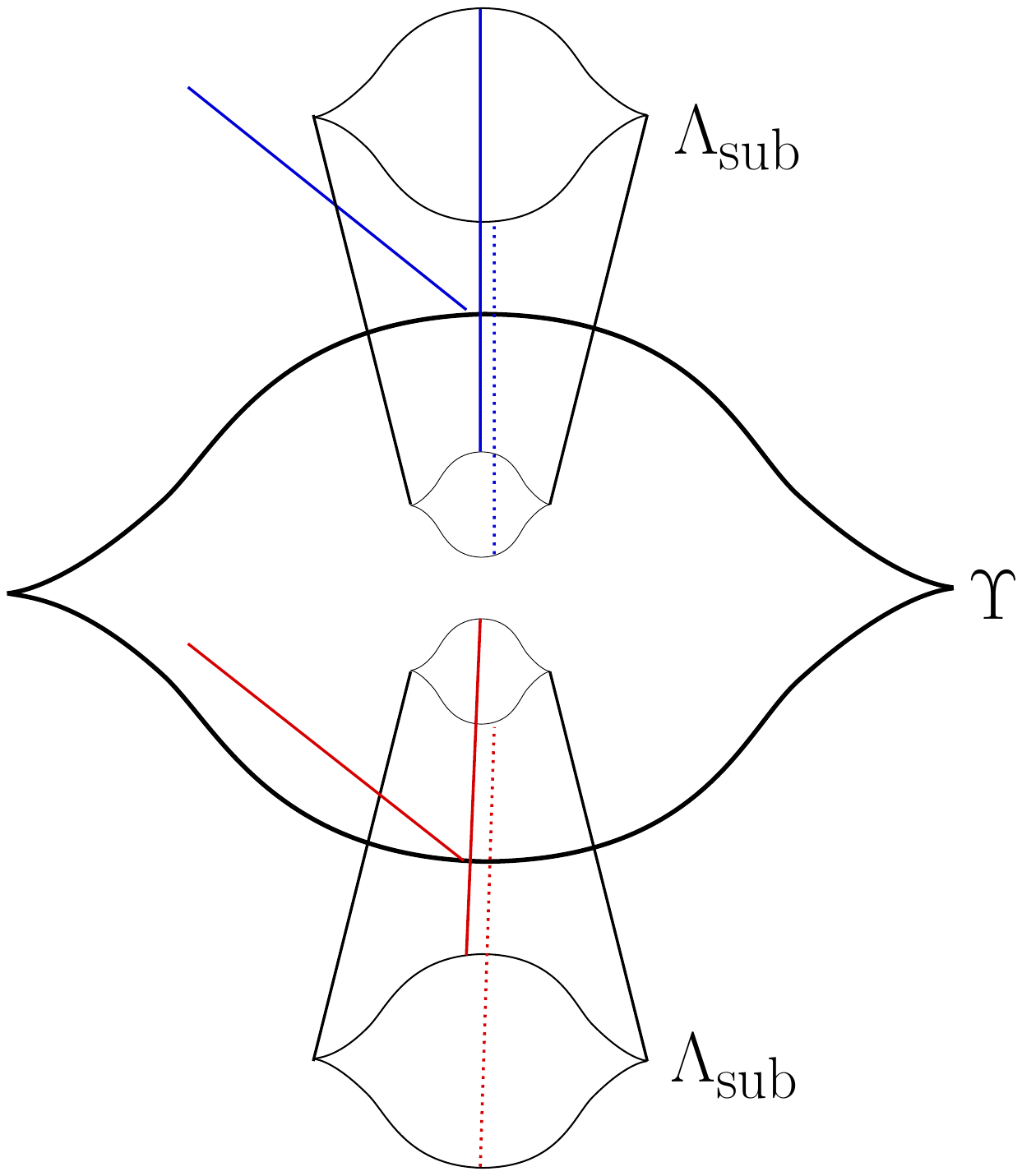}
 \vspace{-1cm}
\caption{The lift of $\Gamma$ in blue, red in $\tilde \Lambda$ in the left hand picture. In the right hand picture we see how the flow line splits at a $Y_0$-vertex and then travels into the handle along the standard Legendrian.}
 \label{fig:tree}
 \end{figure}

 It follows that $\Al(\Lambda)$ is generated by the chords $a,b$ with $|a| =3$, $|b| = 1$ and where
\begin{align}
 \partial a &= b^2, \\
 \partial b &= 0.
\end{align}
By [\cite{surg1}, Corollary 5.7] we get that the Hochschild homology of this DGA coincides with the symplectic homology of $T^*\Cp^2$. This, in turn, gives the singular homology of the free loop space of $\Cp^2$ by the results in \cite{cotangentloop,cotangentloopviterbo, cotangentloopweber}.

 \bibliographystyle{halpha}
 \bibliography{c_main_nn} 

\end{document}